\newcommand{\im}{\mbox{im}}
\renewcommand{\ker}{\mathop{\rm ker}}
\newcommand{\Gr}{\mathop{\rm Gr}}
\newtheorem{theorem}{Theorem}[section]
\newtheorem{lemma}[theorem]{Lemma}
\newtheorem{proposition}[theorem]{Proposition}
\newtheorem{corollary}[theorem]{Corollary}
\theoremstyle{definition}
\newtheorem{definition}[theorem]{Definition}
\newtheorem{example}[theorem]{Example}
\newtheorem{construction}[theorem]{Construction}
\newtheorem{remark}[theorem]{Remark}
\newcommand{\R}{\mathbb{R}}
\newcommand{\Z}{\mathbb{Z}}
\newcommand{\C}{\mathbb{C}}
\newcommand{\hg}{\mathfrak{h}}
\newcommand{\tg}{\mathfrak{t}}
\newcommand{\Ks}{\mathcal{K}}
\newcommand{\Fs}{\mathcal{F}}
\newcommand{\mb}[1]{{\textbf {\textit#1}}}
\renewcommand{\ge}{\geqslant}
\newcommand{\pd}{\partial}
\begin{document}

\title{
  Geometry of compact complex manifolds with maximal torus action
  \thanks{
    }
  }

\author{
  Ustinovskiy Yury
  \footnote{
    Steklov Mathematical Institute, Moscow, Russia.\\
E-mail: yuraust@gmail.com
    }
  }
\date{}
\maketitle


\begin{abstract}
In this present paper we study geometry of compact complex manifolds equipped with a \emph{maximal} torus $T=(S^1)^k$ action. We give two equivalent constructions providing examples of such manifolds given a simplicial fan $\Sigma$ and a compelx subgroup $H\subset T_\C=(\C^*)^k$. On every manifold $M$ we define the canonical holomorphic foliation $\Fs$ and under additional restrictions construct transverse-K\"{a}hler form $\omega_\Fs$. As an application of these constructions, we prove some results on geometry of manifolds~$M$ regarding its analytic subsets.
\end{abstract}

\section{Introduction}

Since the 1970th toric varieties $V_\Sigma$ play particularly important role in algebraic geometry ~\cite{Da,Fu,Co}. Due to the existence of a large groups of symmetries, toric varieties could be explicitly described in terms of combinatorial geometry. Numerous results relating geometric properties of toric varieties and the characteristics of the underlying combinatorial objects provide powerful tools for enumerative algebraic geometry, combinatorial geometry~\cite{St}, number theory~\cite{Po}, algebraic topology~\cite{Le}.


Until the recent times, the situation in complex-analytic category was far less well studied. There were very few explicit examples of complex-analytic manifolds admitting torus action and no formal notion of a ``large group of symmetries''. However, since the beginning of 2000s there were discovered several new families of complex manifolds admitting compact torus $T=(S^1)^m$ action, see~\cite{Me1,Bo,Ta} and results about their complex geometry~\cite{Me2,PU2,Ba}. In 2010 in~\cite{PU1} authors constructed a large family of compact complex manifolds equipped with a torus action, which includes all previous examples as its special cases. Finally, in 2013 in~\cite{Is} there were introduced the notion of~\emph{maximal torus action} and presented a construction yielding all compact complex manifolds equipped with a maximal torus action.


In this paper we prove, that the family of manifolds presented in~\cite{PU1} coincide with the set of compact complex manifolds admitting maximal torus action. Moreover, approach of~\cite{PU1} turns out to be a complex-analytic analogue of Cox-Batyrev construction of toric varieties~\cite{Co}.


Despite the new explicit construction of compact complex manifolds equipped with a maximal torus action, there are very few results on their geometry. Since almost all of them are non-K\"{a}hler, most of the methods of complex geometry are not applicable. We introduce canonical holomorphic foliation~$\Fs$ on the manifolds under consideration and, under some restrictions on the underlying combinatorial data, we construct differential form $\omega_\Fs$ transverse-K\"{a}hler with respect to the foliation $\Fs$. Thus, we generalized results of~\cite{PU2} on complex geometry of~\emph{moment-angle-manifolds}. As an application of this construction, we prove some results on meromorphic functions and analytic subsets of compact complex manifolds equipped with a maximal torus action.


I would like to take this opportunity of thanking my advisers V.M.\,Buchstaber and T.E.\,Panov for the attention to the research. I am also very grateful to M.\,Verbitsky and H.\,Ishida for numerous fruitful discussions.
%
%

\section{Maximal torus actions}
Let $M$ be a smooth compact manifold without a boundary equipped with a smooth effective torus $T=(S^1)^k$ action. In this section we are interested in certain topological constraints the existence of a torus action put on manifold~$M$.


\begin{theorem}[\cite{Br}, see alse~\cite{Is}]\label{th:maxaction}
Let $T$ be a torus acting effectively and smoothly on a smooth manifold $M$. Then for every $x\in M$ one has
\begin{equation}\label{eq:maxaction-ineq}
\dim M \ge \dim T + \dim T_x,
\end{equation}
where $T_x\subset T$ is the stabilizer of $x$. 
\end{theorem}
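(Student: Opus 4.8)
The plan is to reduce the inequality to a statement in linear algebra about the isotropy representation at the point $x$. Write $V=T_xM$ for the tangent space, let $T_x\subseteq T$ be the stabilizer as in the statement, and let $S\subseteq T_x$ denote its identity component, a subtorus with $\dim S=\dim T_x$. Since $S$ fixes $x$, it acts linearly on $V$ through the isotropy representation $\rho\colon S\to GL(V)$, and the whole assertion will follow from analysing the weight decomposition of $\rho$ together with the position of the orbit direction inside $V$.

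First I would record two facts. On one hand, the tangent space $L\subseteq V$ to the orbit $T\cdot x$ has $\dim L=\dim T-\dim T_x$, and because $T$ is abelian every $s\in S$ fixes the orbit pointwise, since $s\cdot(t\cdot x)=(ts)\cdot x=t\cdot(s\cdot x)=t\cdot x$; hence $\rho(s)$ fixes $L$ vectorwise and $L\subseteq V^{S}$, the subspace of $S$-fixed vectors. On the other hand, $\rho$ is \emph{faithful}, and this is the key step and the main obstacle. Here I would invoke Bochner's linearization theorem: near the fixed point $x$ the $S$-action is equivariantly diffeomorphic to the linear action $\rho$ on $V$. Thus if a subtorus of $S$ lies in $\ker\rho$ it acts trivially on a neighbourhood of $x$; fixing a $T$-invariant metric, its fixed-point set is a closed totally geodesic submanifold containing an open set, hence a whole component, so by connectedness of $M$ and effectiveness of the $T$-action this subtorus is trivial. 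Since $S$ acts trivially on $V^{S}$, faithfulness shows that $S$ acts with finite kernel on a complement $W$ with $V=V^{S}\oplus W$.

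Finally I would count dimensions. Decomposing the real representation $W$ of the torus $S$ into irreducible summands gives $W=W_1\oplus\cdots\oplus W_m$ with each $\dim W_i=2$ carrying a nonzero weight $\chi_i\in\mathrm{Hom}(S,S^1)$. The finiteness of $\bigcap_i\ker\chi_i$ forces the $\chi_i$ to span $\mathrm{Hom}(S,S^1)\otimes\R\cong\R^{\dim S}$, so $m\ge\dim S$ and $\dim W=2m\ge 2\dim T_x$. Combining this with $\dim V^{S}\ge\dim L=\dim T-\dim T_x$ yields
\[
\dim M=\dim V=\dim V^{S}+\dim W\ge(\dim T-\dim T_x)+2\dim T_x=\dim T+\dim T_x,
\]
which is precisely~\eqref{eq:maxaction-ineq}. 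The only genuinely nontrivial input is the faithfulness of the isotropy representation via Bochner linearization; the remainder is bookkeeping with weights.
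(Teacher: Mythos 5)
Your proof is correct, and it is essentially the argument of the sources the paper cites: Theorem~\ref{th:maxaction} is imported here from~\cite{Br} and~\cite{Is} without proof, and the standard proof is exactly your chain --- Bochner linearization of the identity component $S\subseteq T_x$ at $x$, the totally geodesic fixed-point-set argument plus effectiveness to show the isotropy representation has finite kernel, and the weight count giving $\dim V^{S}\ge\dim T-\dim T_x$ and $\dim W\ge 2\dim T_x$. The only point worth flagging is that connectedness of $M$ is genuinely required (your step ``hence a whole component, so by connectedness of $M$'' is where it enters, and the inequality fails for disconnected $M$); the theorem as stated omits this hypothesis, but it is clearly intended --- compare Proposition~\ref{prop:maximality}, where connectedness is explicit.
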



Theorem~\ref{th:maxaction} motivates introduction of the notion of \emph{maximal torus action} on a smooth manifolds $M$, see~\cite{Is}.


\begin{definition}[{\cite[Def. 2.1]{Is}}]
Effective action of a torus $T$ on a smooth manifold $M$ is said to be \emph{maximal}, if there exists $x\in M$ such that inequality~\eqref{eq:maxaction-ineq} turns into the equality:
\begin{equation}\label{eq:maxaction-eq}
\dim M = \dim T + \dim T_x,
\end{equation}
\end{definition}


The following proposition is the direct consequence of Theorem~\ref{th:maxaction}:
\begin{proposition}[{\cite[Lemma 2.2]{Is}}]\label{prop:maximality}
Let $T$ be a torus acting smoothly and effectively on the connected smooth manifold $M$. Assume that the induced action of a toric subgroup  $T_0\subset T$ is maximal. Then $T_0=T$.
\end{proposition}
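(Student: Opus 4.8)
The plan is to apply Theorem~\ref{th:maxaction} to the full torus $T$ at the very point witnessing maximality of the $T_0$-action, and then to run a dimension-counting argument that pins down $\dim T_0=\dim T$. First I would choose $x\in M$ such that the maximality of the $T_0$-action gives the equality $\dim M=\dim T_0+\dim(T_0)_x$, where $(T_0)_x\subset T_0$ denotes the stabilizer of $x$ in $T_0$.

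The key observation is that the stabilizer of $x$ in $T_0$ is exactly $(T_0)_x=T_0\cap T_x$, since an element of $T_0$ fixes $x$ precisely when it fixes $x$ as an element of the ambient torus $T$. In particular $(T_0)_x$ is a subgroup of $T_x$, so $\dim(T_0)_x\le\dim T_x$, and of course $\dim T_0\le\dim T$ because $T_0\subset T$.

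Next I would apply Theorem~\ref{th:maxaction} to the action of the full torus $T$ at the same point $x$, obtaining $\dim M\ge\dim T+\dim T_x$. Combining this with the two inequalities above and the maximality equality yields the chain
\[
\dim T_0+\dim(T_0)_x=\dim M\ge\dim T+\dim T_x\ge\dim T_0+\dim(T_0)_x.
\]
Hence every inequality is forced to be an equality; in particular $\dim T+\dim T_x=\dim T_0+\dim(T_0)_x$, and since neither summand on the right can exceed its counterpart on the left, this forces $\dim T_0=\dim T$ (and, incidentally, $\dim(T_0)_x=\dim T_x$).

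Finally, it remains to upgrade the equality of dimensions to the equality of groups, and this is the only place where I expect any subtlety. Here I would use that $T_0$ is a subtorus, i.e.\ a closed connected subgroup of $T$. The equality $\dim T_0=\dim T$ means that the Lie subalgebra $\tg_0\subset\tg$ coincides with $\tg$, so $T_0\supseteq\exp(\tg)=T$, because the connected torus $T$ is generated by the image of the exponential map; therefore $T_0=T$. The dimension sandwich is the heart of the argument, while the passage from equal dimensions to equal groups is routine once connectedness of $T_0$ is invoked. Note that connectedness of $M$ itself plays no essential role here, as the entire argument is local at the single point $x$.
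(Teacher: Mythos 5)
Your argument is correct and is exactly what the paper has in mind: the paper offers no written proof, stating only that the proposition is a ``direct consequence'' of Theorem~\ref{th:maxaction}, and your dimension sandwich --- applying the inequality for the full torus $T$ at the very point $x$ witnessing maximality of $T_0$, then forcing $\dim T_0=\dim T$ and invoking connectedness of the subtorus --- is precisely that consequence. No gaps; the only minor caveat is that connectedness of $M$ is not entirely idle, since it is part of the standard hypotheses under which Theorem~\ref{th:maxaction} is proved (the slice representation of $T_x$ is effective only when the action is effective on the component through $x$).
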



Proposition~\ref{prop:maximality} implies that a maximal action $T:M$ could not be extended to an action of a larger torus $T'\supset T$. Let us provide several examples of manifolds equipped with a maximal torus action.
  

\begin{example}
There are two possible extreme case in the equation~\eqref{eq:maxaction-eq}~--- (i) $\dim T_x=0$ for some (therefore for any) point $x\in M$; (ii) $\dim T_x=\dim T$ for some point $x$.

1. In the case~(i) one obtains only tori $T$ acting on themselves $T\times T\to T$. This action is free, i.e. for any point $x\in T$ stabilizer $T_x$ is trivial, $\dim T_x=0$, and inequality \eqref{eq:maxaction-ineq} turn into the equation.
\end{example}

2. The case~(ii) already provides a lot of interesting examples, including, in particular, compact symplectic manifolds equipped with a Hamiltonian action of a half-dimensional torus, classified by Delzant~\cite{De}. For instance, let $T=U(1)^n$ be a torus acting on a complex projective space $\C P^n$ via coordinate-wise multiplication in homogeneous coordinates:
\[
(t_1,\dots,t_n)\cdot [z_0:z_1:\dots:z_n]=[z_0:t_1z_1:\dots:t_nz_n].
\]

In this case the point $x=[1:0:\dots:0]$ is fixed, i.e. $T_x=T$, so $\dim\C P^n=\dim T+\dim T_x$ and the action is maximal.

3. Let $S^{2n-1}=\{z\subset\C^n\colon |z|=1\}$ be a unit sphere in $\C^n$. Torus $T=U(1)^n$, acting on $\C^n$ via coordinate-wise multiplication, preserves the sphere. Stabilizer of the point 
$x=(1,0,\dots,0)$ is the coordinate subtorus $T_x=\{(1,z_2,\dots,z_n)\in T\}$ and inequality~\eqref{eq:maxaction-ineq} again turns into an equation.

%
%
%
%

Note, that the class of \emph{smooth} manifolds equipped with a maximal torus action if very large and apparently does not have complete description. In particular, given a manifold $M$ equipped with a maximal torus $T$ action and a any manifold $N$ with effective torus $T$ action, one can construct new manifold $M\#_{T\cdot y}N$ taking equvariant connected sum along free $T$ orbit:
\begin{equation}\label{eq:connected-sum}
M\#_{T\cdot y}N=\bigl(M\backslash U(T\cdot y)\bigr) \bigcup_{\pd U(T\cdot y)} \bigl( N\backslash U(T\cdot y)\bigr),
\end{equation}
where $U(T\cdot y)$ is an equivariant tubular neighbourhood. Since maximality of a torus action is provided by conditions at one point $x\in M$, action of the torus $T$ on $M\#_{T\cdot y}N$ is again maximal.  

%

As we will see further, the situation in complex-analytic category changes fundamentally. In particular, the results of~\cite{Is} imply that all compact complex manifolds equipped with a maximal torus action have explicit description similar to the description of smooth complete toric varieties~\cite{Fu}. We present the construction from~\cite{Is} and the analogue of Cox-Batyrev construction~\cite{Co}.


\section{Complex manifolds}

\begin{definition}
Smooth action of a group $G$ on an almost complex manifold $M$ \emph{preserves almost complex structure $J$}, if for any $g\in G$ multiplication $m_g\colon M\to M$ by $g$ commutes with the operator of almost complex structure: 
\[
d m_g\circ J=J\circ d m_g.
\] 
\end{definition}


Further all groups act on \emph{complex-analytic} manifolds, preserving the corresponding almost complex structure.


Torus action $T:M$ defines homomorphism from the Lie algebra of $\tg$ of $T$ to the Lie algebra of vector fields on the manifold: $\rho\colon\tg\to\mathcal{L}(M)$. This homomorphism could be complexified by the means of the almost complex structure operator: $\rho_{\C}\colon\tg_\C=\tg\oplus i\tg\to\mathcal{L}(M)$. In~\cite{Is} it is proved that the integrability of the almost complex structure guarantees that $\rho_\C$ is a homomorphism of Lie algebras. Consequently, the group $\tg_\C$ acts on $M$ via exponential map. Since the lattice $N$ dual to the character lattice of $T$ acts trivially on $M$, the action of $\tg_\C$ descends to the action of $\tg_\C/N$. This implies the following proposition:


\begin{proposition}[{\cite[Section 3]{Is}}]
Let $T$ be the torus acting on the complex manifold $M$. The action $T:M$ can be extended to the \emph{complexified action} of the algebraic torus $T_\C\simeq (\C^*)^{\dim T}$ on~$M$.
\end{proposition}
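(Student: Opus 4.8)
The argument I would give follows the infinitesimal-to-global scheme sketched before the statement, and the real work is a single bracket identity forced by integrability of $J$.

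First I would record the infinitesimal data. The action $T:M$ produces the fundamental vector fields $\rho(X)$, $X\in\tg$, and since $T$ is abelian these commute, $[\rho(X),\rho(Y)]=0$ for all $X,Y\in\tg$. Invariance of the almost complex structure means that the flow of each $\rho(X)$, namely the torus action $x\mapsto\exp(tX)\cdot x$, preserves $J$, i.e. $\mathcal{L}_{\rho(X)}J=0$. I then set $\rho_\C(X+iY):=\rho(X)+J\rho(Y)$ for $X,Y\in\tg$, the only $\C$-linear extension of $\rho$ compatible with $J$. Because $\tg_\C$ is an abelian Lie algebra, proving that $\rho_\C$ is a homomorphism of Lie algebras amounts precisely to checking that the fields $\rho(X)$ and $J\rho(Y)$ commute pairwise.

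The heart of the proof is this commutativity, and it is where integrability enters. From $\mathcal{L}_{\rho(X)}J=0$ one gets, for every $Y$, the identity $[\rho(X),J\rho(Y)]=J[\rho(X),\rho(Y)]$, whose right-hand side vanishes as $T$ is abelian; hence $[\rho(X),J\rho(Y)]=0$, and by symmetry $[J\rho(X),\rho(Y)]=0$. For the remaining bracket I would invoke the vanishing of the Nijenhuis tensor of $J$, written for $\xi=\rho(X)$, $\eta=\rho(Y)$ as
\[
\mathcal{N}_J(\xi,\eta)=[J\xi,J\eta]-J[J\xi,\eta]-J[\xi,J\eta]-[\xi,\eta].
\]
Integrability gives $\mathcal{N}_J\equiv 0$, and the three brackets $[\xi,\eta]$, $[\xi,J\eta]$, $[J\xi,\eta]$ have just been shown to vanish, so $[J\rho(X),J\rho(Y)]=0$. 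Thus all of the fields $\rho(X),J\rho(Y)$ commute and $\rho_\C$ is a homomorphism. I expect this to be the main obstacle: without integrability the fields $J\rho(Y)$ need not preserve $J$ and the Nijenhuis term survives, so $\rho_\C$ would fail to be a homomorphism.

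It remains to integrate and to descend. On the compact manifold $M$ every vector field is complete, so the commuting complete fields in the image of $\rho_\C$ integrate, via their (commuting) flows, to a smooth action of the simply connected abelian group $\tg_\C\cong\C^{\dim T}$. Finally I would identify the kernel of $\exp\colon\tg_\C\to T_\C$ with the cocharacter lattice $N=\ker(\exp\colon\tg\to T)\subset\tg\subset\tg_\C$; since the $\tg_\C$-action restricted to $\tg$ coincides with the original torus action and $\exp(N)=\{1\}\subset T$, the lattice $N$ acts trivially. Hence the $\tg_\C$-action descends to $\tg_\C/N\cong T_\C\cong(\C^*)^{\dim T}$, yielding the desired complexified action extending $T:M$.
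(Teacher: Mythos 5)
Your proof is correct and follows the same route the paper sketches in the paragraph preceding the proposition: complexify $\rho$ by setting $\rho_\C(X+iY)=\rho(X)+J\rho(Y)$, use integrability to see that $\rho_\C$ is a Lie algebra homomorphism, integrate the resulting commuting complete fields to a $\tg_\C$-action, and descend modulo the lattice $N$ to get the $T_\C$-action. The only difference is that you supply the bracket computation (via $\mathcal{L}_{\rho(X)}J=0$ and the vanishing of the Nijenhuis tensor) that the paper delegates to the reference \cite{Is}.
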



Note, that the action $T_\C\colon M$ is not necessarily effective. Thus let us introduce the following group:


\begin{equation}\label{eq:action-ker}
H:=\{h\in T_\C|\forall x\in M\ hx=x\}
\end{equation}

Since the torus action preserves complex structure, $H$ is a commutative complex subgroup. Moreover, since compact part $T\subset T_\C$ acts effectively, то $H\cap T=\{e\}$ and $H\simeq \C^k$ for some $k\in\Z$.


Let action $T\colon M$ be maximal. In this case subgroup $H\subset T_\C$ allows to construct \emph{canonical holomorphic foliation} $\Fs$ on $M$, which turns out to be an extremely effective tool for study of complex geometry of $M$. 


\begin{construction}[Canonical holomorphic foliation]\label{cstr:foliation}
The results of~\cite{Is} imply, that subgroup $T_\C/H$ acts effectively on manifold $M$ with a dense open orbit. Let $\hg\subset\tg_\C$ be th Lie algebra of $H$, and $\overline\hg$ be its complex conjugate Lie algebra $\hg$ with respect to the decomposition $\tg_\C=\tg\oplus i\tg$. Note, that both $\hg,\overline\hg\subset \tg_\C$ are complex Lie algebras.


$H'=\exp \overline\hg\subset T_\C$ orbits define foliation $\Fs$ on $M$ which is further referred to as \emph{canonical}. Since $\hg\cap\tg=\{0\}$, vector spaces $\hg$ and $\overline\hg$ are transverse, thus group $H'\cap H$ is discrete and leaves of $\Fs$ inside the open $T_\C/H$ orbit are isomorphic to $H'/(H\cap H')\simeq\C^{\dim_\C H}/\Lambda$, where $\Lambda\subset \C^{\dim_\C H}$ is some discrete subgroup. The dimension of foliation $\Fs$ is
\[
\dim_\C\Fs=\dim_\C H=\dim_\C T_\C-\dim_\C M=\frac{1}{2}(\dim T-\dim T_x).
\]
Note, that the leaves of the foliation $\Fs$ are not necessarily closed.
%
\end{construction} 

\begin{example}[Hopf surface]\label{ex:hopf}
\noindent Let $A\colon\C^2\to\C^2$ be a semi-simple linear operator with eigenvalues $\lambda_1=e^{2\pi i q_1},\lambda_2=e^{2\pi i q_2}$ such that $|\lambda_1|,|\lambda_2|>1$.

\noindent\emph{Hopf surface} is a manifold
\[
\mathcal H:=(\C^2\backslash \{0\})/\Gamma,
\]
where $\Gamma\simeq \Z$ is a group acting on $\C^2$ generated by $A$. It is straightforward to check, that $\mathcal H$ is a complex manifold diffeomorphic to $S^3\times S^1$.


Group $G=(\C^*)^2/\Gamma\simeq (S^1)^3\times\R$ acts on $\mathcal H$ with the dense open orbit, and toric subgroup
\[
T\simeq (S^1)^3\subset (\C^*)^2/\Gamma
\] 
\[
(e^{2\pi i t_1},e^{2\pi i t_2},e^{2\pi i t_3})\mapsto (e^{2\pi i (t_1 + q_1t_3)}, e^{2\pi i (t_2+q_2t_3)})
\]
acts maximally: stabilizer $T_z$ of $z=[(1,0)]\in \mathcal H$ is $\{(1,e^{2\pi i t_2}, 1)\}$, so
\[
\dim T+\dim T_z=\dim_\R \mathcal H=4
\]
In this case $T_\C\simeq (\C^*)^3$ and given identification $T\simeq (S^1)^3$, fixed above, the kernel of the action $T_\C\colon \mathcal H$ is the subgroup $H=\{(e^{wq_1}, e^{wq_2}, e^{-w})|w\in\C\}$ and $G=T_\C/H.$
%
\end{example}

\section{Quotients of non-singular toric varieties}

In this section we discuss in details two equivalent constructions providing all compact complex manifolds equipped with a maximal torus action. Since both of them are closely related to the construction of \emph{toric varieties}, we start with some basic facts required for their description and classification. Detailed introduction into the theory of toric varieties could be found in~\cite{Fu,Da}.

\subsection{Toric varieties}

\begin{definition}
Let $\mb a_1,\dots,\mb a_k\in N_\R\simeq \R^n$ be a set of vectors. \emph{Polyhedral cone} spanned by vectors $\mb a_1,\dots,\mb a_k$ is the set
\[
\sigma=\{\mu_1\mb a_1+\dots+\mu_k\mb a_k| \mu_i\ge 0\}.
\]
Cone $\sigma$ is \emph{strictly convex}, if it does not contain a line. \emph{Strictly convex} cone is \emph{simplicial} if vectors it is spanned by are linearly independent. Cone $\sigma$ is \emph{regular} if it is spanned by a part of a basis of some fixed lattice $N\subset N_\R$, $N\simeq\Z^n$.
\end{definition}

%
\begin{definition}
\emph{Dual cone} for a cone $\sigma\subset N_\R$ is the set
\[
\check\sigma=\{\mb u\in N_\R^*\ |\ \forall\mb a\in\sigma\ \langle\mb u,\mb a\rangle\ge0\}.
\]
\end{definition}


\begin{definition}
\emph{Fan} is a set of cones $\Sigma=\{\sigma_i\}_i$, such that (1) face of each cone is an element of the set, (2) intersection of any pair of cones is a face of each of them. Fan $\Sigma$ is \emph{regular}, if all its cones are regular and is \emph{complete}, if $\cup_i\sigma_i=N_\R$.
%
\end{definition}

\begin{definition}
\emph{Toric variety} is a normal irreducible algebraic variety $V$ containing an algebraic torus $T_\C$ as an open dense subset, such that the action of the torus on itself extends to the whole variety
\end{definition}


\noindent Examples of toric varieties are $\C^n, \C P^n, \C^n\backslash\{0\}$.

The main result of the theory toric varieties establishes a one-to-one correspondence between non-singular toric varieties and regular fans in the Lie algebra $\tg$ of a compact torus $T\subset T_\C$. Namely, every smooth toric variety $V$ could by obtained via the following construction:


\begin{construction}[Toric varieties]\label{cstr:toric}
\

\noindent Let $\Sigma$ be a fan in the Lie algebra $\tg$ of a compact torus $T$. Suppose the fan $\Sigma$ is non-singular with respect to the lattice $N\subset \tg$, dual to the character lattice. For each cone $\sigma\in\Sigma$ we define an algebra $\C[\check\sigma\cap N^*]$ and an open chart $U_\sigma=\mathrm{Spec}\,\C[\check\sigma\cap N^*]$. The set of all charts $U_\sigma$ is partially ordered by inclusion in the same way as the set of cones of $\Sigma$. Let us introduce the scheme
\[
V_\Sigma:=\varprojlim\limits_{\sigma\in \Sigma} U_\sigma.
\]
The scheme $V_\Sigma$ turns out to be a non-singular variety equipped with an action of the algebraic torus $T_\C=(\tg/N)_\C$, which acts with an open dense orbit. Variety $V_\Sigma$ is compact of and only if fan $\Sigma$ is complete, see~\cite{Fu}.
%
\end{construction}

\subsection{Compact complex manifolds with maximal torus actions}

There is similar classification of compact complex manifolds equipped with a maximal torus action. We start with the construction from~\cite{PU1}.
%

\begin{construction}[Quotient construction-I]\label{cstr:pu}

Let $\Ks$ be a \emph{simplicial complex} on the set of vertices $[m]=\{1,\dots,m\}$, i.e. a family of subsets of $[m]$ closed under the operation of taking subsets. Let $\Sigma_\Ks$ be a simplicial fan in $\R^m$:


\begin{equation}\label{eq:sigmak}
\Sigma_\Ks=\bigcup_{I\in \Ks}\langle e_i|i\in I\rangle_{\R_\ge},
\end{equation}
where $e_1\dots,e_m$ is the fixed basis of $\R^m$, $I$ runs over all simplices of $\Ks$ and $\langle e_i|i\in I\rangle_{\R_\ge}$ is cone spanned by vectors $e_i$. Let $U(\Ks):=V_{\Sigma_\Ks}$ be the corresponding toric variety equipped with the action of the torus $T_\C\simeq(\C^*)^m$ and let $\tg$ be the Lie algebra of the compact torus $T\subset T_\C$. It is easy to check, that $U(\Ks)$ is the complement of the arrangement of coordinate subspaces of~$\C^m$:

\[
U(\Ks)=\C^m\backslash\bigcup_{J\not\in\Ks} \{z_j=0|j\in J\}.
\] 
Let $\hg\subset \tg_\C=\tg\oplus i\tg$ be a complex subspace satisfying the following two conditions:


\begin{itemize}
\item[(a)] group $H=\exp\hg\subset T_\C\simeq(\C^*)^{[m]}$ trivially intersect coordinate subtori of the form $(\C^*)^I$ for $I\in\Ks$;
\item[(b)] projection $q\colon \tg\to \tg/p(\hg)$, where $p\colon \tg_\C\to \tg$ is the natural projection on the real part, bijectively maps the fan $\Sigma_\Ks$ to the complete fan $q(\Sigma_\Ks)$.
\end{itemize}
%

As it is proved in~\cite{PU1}, conditions on the subspace $\hg$ guarantee that the group $H$ acts freely and properly on $U(\Ks)$ and the orbit space $M=U(\Ks)/H$ is a compact complex manifold equipped with a maximal torus action.
%
\end{construction}

Another construction providing large family of manifolds equipped with a maximal torus action is presented in~\cite{Is}:
%

\begin{construction}[Quotient construction-II]\label{cstr:is}
Let $V_\Sigma$ be a non-singular toric variety equipped with an action of a torus $T_\C$; $\tg$ is a Lie algebra of the compact torus $T\subset T_\C$. Let $\hg\subset \tg_\C=\tg\oplus i \tg$ be a complex subspace satisfying the following two conditions:
\begin{itemize}
\item[(a)] $\hg\cap \tg=\{0\}$, i.e. the restriction $p|_\hg$ of the projection $p\colon \tg_\C\to \tg$ is the inclusion;
\item[(b)] projection $q\colon \tg\to \tg/p(\hg)$ maps bijectively the fan $\Sigma$ to the complete fan~$q(\Sigma)$.
\end{itemize}


Consider the group $H:=\exp \hg\subset T_\C$. It can be proven, that conditions (a) and (b) imply that the group $H$ acts on $V_\Sigma$ freely and properly and the orbit space 
\[
M(\Sigma,\hg):=V_\Sigma/H
\] 
is a compact complex manifold equipped with a maximal torus $T$ action.
%
\end{construction}

Remarkable result of~\cite{Is} states: 

\begin{theorem}[{\cite[Cor.\,6.7]{Is}}]\label{th:is}
Any compact complex manifold equipped with a maximal torus action can be obtained via Construction~\ref{cstr:is}.
%
\end{theorem}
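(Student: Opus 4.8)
The plan is to invert Construction~\ref{cstr:is}: starting from a compact complex manifold $M$ with maximal $T$-action, I would reconstruct a non-singular fan $\Sigma$ in $\tg$ and a complex subspace $\hg\subset\tg_\C$ for which $M\cong V_\Sigma/H$. Much of the needed structure is already available. By the complexification proposition the $T$-action extends to a $T_\C$-action, and as recalled in Construction~\ref{cstr:foliation} the quotient $T_\C/H$ acts effectively with a dense open orbit $O\subset M$, where $H$ is the ineffectivity kernel~\eqref{eq:action-ker}. Fixing a base point $x_0\in O$ identifies $O\cong T_\C/H$, so the whole problem reduces to understanding how the remaining, lower-dimensional $T_\C$-orbits of $M$ are attached to $O$, and to showing that this attaching is governed by a smooth fan.

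I would read off the candidate fan from the degeneration behaviour of the $T_\C$-action. For a cocharacter $v\in N\subset\tg$ let $\lambda_v\colon\C^*\to T_\C$ be the corresponding one-parameter subgroup, and consider whether the limit $\lim_{s\to 0}\lambda_v(s)\cdot x_0$ exists in $M$. The key claim is that the set of admissible $v$ is the support of a rational fan: partitioning it according to which $T_\C$-orbit the limit lands in produces cones $\sigma$, and $\Sigma$ is the collection of these cones together with their faces. The dimension of $\sigma$ should equal the codimension of the corresponding orbit, exactly as in the orbit-cone correspondence for honest toric varieties; that $\Sigma$ is genuinely a fan (intersections are faces, and so on) is itself something to verify.

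The crux is a \emph{local-model theorem}. I would prove that near each $T_\C$-orbit $M$ is holomorphically and $T_\C$-equivariantly isomorphic to the quotient by $H$ of a standard smooth affine toric chart $U_\sigma=\mathrm{Spec}\,\C[\check\sigma\cap N^*]$ as in Construction~\ref{cstr:toric}. The tool is a holomorphic slice for the $T_\C$-action: because the action preserves the integrable complex structure, the slice is a complex submanifold carrying a linear isotropy representation, and maximality of the $T$-action pins this representation down to the coordinate representation of an affine toric variety. Smoothness of $M$ as a manifold then forces each cone $\sigma$ appearing above to be regular with respect to $N$, so $\Sigma$ is non-singular. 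Gluing these charts to the open orbit $O$ assembles a non-singular toric variety $V_\Sigma$ together with a holomorphic, $T_\C$-equivariant surjection $\pi\colon V_\Sigma\to M$ that exhibits $M$ as the quotient of $V_\Sigma$ by a free and proper $H$-action; properness and surjectivity follow from compactness of $M$ and connectedness.

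Finally I would check that $\hg$, the Lie algebra of $H$, satisfies the two conditions of Construction~\ref{cstr:is}, which certifies $M=V_\Sigma/H$ as a genuine instance of that construction. Condition~(a), $\hg\cap\tg=\{0\}$, is equivalent to $H\cap T=\{e\}$, i.e. to effectiveness of the compact torus $T$, which holds by hypothesis and is recorded in the discussion of~\eqref{eq:action-ker}. For condition~(b), injectivity of $q\colon\tg\to\tg/p(\hg)$ on the cones of $\Sigma$ is forced by the freeness of the $H$-action, while completeness of $q(\Sigma)$ follows from compactness of $M$: a direction of $\tg/p(\hg)$ missed by $q(\Sigma)$ would yield a one-parameter family in $V_\Sigma/H=M$ with no limit point, contradicting compactness. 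With (a) and (b) in hand, $M$ is precisely the output of Construction~\ref{cstr:is}. I expect the decisive obstacle to be the local-model theorem of the third step, where the integrability of the complex structure, the equality~\eqref{eq:maxaction-eq} of maximality, and the smoothness of $M$ must be used together to force the isotropy data into the rigid combinatorial shape of a regular cone.
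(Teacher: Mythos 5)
You should first note that the paper does not prove Theorem~\ref{th:is} at all: it is imported verbatim from~\cite{Is} (Cor.~6.7), and the present text only states it and builds on it. So there is no in-paper argument to measure your proposal against; it can only be judged against what such a proof must contain.

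As a plan, your outline points in the right direction and is consistent with the strategy actually carried out in~\cite{Is}: complexify the action, identify the dense orbit $T_\C/H$, extract a fan from the way lower-dimensional orbits are attached, establish a local normal form, and verify conditions (a) and (b) of Construction~\ref{cstr:is}. But as a proof it has a genuine gap, and it sits exactly where you say it does. The local-model theorem --- that an invariant neighbourhood of each $T_\C$-orbit is equivariantly biholomorphic to $U_\sigma/H$ for a \emph{regular} cone $\sigma$ --- is only named, not proved; this is the entire technical content of~\cite{Is}, where one must use integrability of $J$, the maximality equation~\eqref{eq:maxaction-eq}, and finiteness of the orbit stratification together to rigidify the isotropy representation. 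Two further steps are asserted but not established: (i) that the cones read off from limits of one-parameter subgroups actually form a fan (faces, intersections, and the claimed codimension formula), which does not follow from the orbit--cone correspondence for toric varieties since $M$ is not yet known to be a toric quotient; and (ii) the gluing step, which is subtler than stated --- the charts $U_\sigma$ live \emph{upstairs}, not in $M$, so one must show the transition maps between local models lift compatibly to produce a single $V_\Sigma$ carrying a globally defined, free and proper $H$-action, rather than merely a collection of local quotient presentations. Your verification of (a) and (b) is essentially correct once the presentation $M=V_\Sigma/H$ is in hand, but that presentation is the theorem.
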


Note, that the second construction operates with a larger family of toric varieties, while the first construction considers more general subgroups $H\subset T_\C$. Below we show that these two approached are in fact equivalent.
%

\begin{theorem}\label{th:cox-analogue}
Any compact complex manifold equipped with a maximal torus action can be obtained via construction~\ref{cstr:pu}.
%
\end{theorem}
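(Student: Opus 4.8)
The plan is to reduce Construction~\ref{cstr:pu} to Construction~\ref{cstr:is} via the \emph{Cox quotient} presentation of a non-singular toric variety. By Theorem~\ref{th:is} we may assume $M=V_\Sigma/H$ as in Construction~\ref{cstr:is}, where $\Sigma$ is a non-singular fan in $\tg$ with primitive ray generators $v_1,\dots,v_m$ and $H=\exp\hg$. Let $\Ks$ be the underlying simplicial complex of $\Sigma$ on $[m]$, i.e.\ $I\in\Ks$ iff the vectors $v_i$, $i\in I$, span a cone of $\Sigma$; by construction the linear map $\pi\colon\R^m\to\tg$, $e_i\mapsto v_i$, carries the fan $\Sigma_\Ks$ of~\eqref{eq:sigmak} bijectively onto $\Sigma$. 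The Cox construction then presents $V_\Sigma$ as a geometric quotient $U(\Ks)/G$, where $U(\Ks)=V_{\Sigma_\Ks}$ is the coordinate-subspace complement and $G=\ker\bigl(\exp\pi_\C\colon(\C^*)^m\to T_\C\bigr)$, with $\pi_\C=\pi\oplus i\pi$; non-singularity of $\Sigma$ is exactly what guarantees that $G$ acts freely and properly on $U(\Ks)$.

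Composing the two quotients gives $M=V_\Sigma/H=U(\Ks)/\hat H$, where $\hat H=\pi^{-1}(H)$ is the full preimage, with Lie algebra $\hat\hg=\pi_\C^{-1}(\hg)\supseteq\ker\pi_\C$. I would then verify that $(\Ks,\hat\hg)$ is a valid Construction~\ref{cstr:pu} datum. For condition~(a), a point $z\in U(\Ks)$ has stabilizer $(\C^*)^{Z(z)}$ with its zero-set $Z(z)\in\Ks$, so freeness of $\hat H$ on $U(\Ks)$ is equivalent to $\hat H\cap(\C^*)^I=\{e\}$ for all $I\in\Ks$; this freeness holds because $G$ acts freely (non-singularity) and $H$ acts freely on $V_\Sigma$, so any element of $\hat H$ fixing $z$ maps to an element of $H$ fixing its image, hence lies in $G$ and is trivial. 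For condition~(b), the hypothesis $\hg\cap\tg=\{0\}$ together with surjectivity of $\pi$ yields $p(\hat\hg)=\pi^{-1}(p(\hg))$, so $\pi$ descends to an isomorphism $\R^m/p(\hat\hg)\xrightarrow{\sim}\tg/p(\hg)$ intertwining the projection $q\colon\R^m\to\R^m/p(\hat\hg)$ with $q\circ\pi$; since $\pi$ is a bijection of fans $\Sigma_\Ks\to\Sigma$ and $q(\Sigma)$ is complete by~\ref{cstr:is}(b), the image $q(\Sigma_\Ks)$ is again a complete fan onto which $\Sigma_\Ks$ maps bijectively. Properness of $\hat H$ follows from properness of $G$ and $H$, so $M$ is indeed the compact complex manifold produced by Construction~\ref{cstr:pu}.

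The step I expect to be the main obstacle is ensuring that $\hat H=\exp\hat\hg$, i.e.\ that $\hat H$ is \emph{connected}: since $H$ is connected and $\pi$ is surjective, $\pi^{-1}(H)$ is connected precisely when $G$ is connected, which in turn fails exactly when the rays $v_1,\dots,v_m$ do not generate the lattice $N$ (torsion in the divisor class group) or do not span $\tg$ (a torus factor of $V_\Sigma$). I would handle both defects at once before applying Cox, by enlarging the data with \emph{ghost vertices}: adjoin indices $m+1,\dots,m+s$ that belong to no face of $\Ks$ and assign them lattice vectors $v_{m+1},\dots,v_{m+s}\in N$ chosen so that $v_1,\dots,v_{m+s}$ generate $N$ over $\Z$ and span $\tg$. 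Each ghost vertex replaces $U(\Ks)$ by $U(\Ks)\times\C^*$ while leaving the quotient $U(\Ks)/G=V_\Sigma$ unchanged, and after the enlargement $\pi\colon\Z^{m+s}\to N$ is surjective, so $G$ is a torus and $\hat H$ is connected. With this reduction in place the identification of the two constructions goes through as above, establishing that every compact complex manifold with a maximal torus action arises via Construction~\ref{cstr:pu}.
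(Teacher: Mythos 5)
Your proposal is correct and follows the same overall route as the paper: reduce to Construction~\ref{cstr:is} via Theorem~\ref{th:is}, present $V_\Sigma$ as a Cox quotient $U(\Ks)/G$, pass to the preimage $\pi^{-1}(H)$ acting on $U(\Ks)$, and verify conditions (a) and (b) of Construction~\ref{cstr:pu}. The one step where you genuinely diverge is the treatment of the connectedness of $\pi^{-1}(H)$, which is indeed the only nontrivial obstacle. The paper keeps the standard Cox presentation, accepts that $\pi^{-1}(H)$ splits as $H'\times\Gamma$ with $\Gamma$ finite, and then invokes Proposition~\ref{prop:finite} to trade each finite cyclic factor for a twisted $\C^*$-action on $U(\Ks)\times(\C^*)^r$, arriving at a connected group $H'\times(\C^*)^r$ acting on the enlarged arrangement complement. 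You instead repair the problem at the source: by adjoining ghost vertices whose lattice vectors are chosen so that $v_1,\dots,v_{m+s}$ generate $N$ over $\Z$ and span $\tg$, you make the Cox group $G$ a subtorus, whence $\pi^{-1}(H)$ is connected and equals $\exp$ of its Lie algebra with no further manipulation. The two devices are equivalent in effect --- both amount to multiplying $U(\Ks)$ by $\C^*$ factors indexed by ghost vertices --- but yours avoids the explicit isomorphism $M/G_k\simeq(M\times\C^*)/\C^*$ and makes the final datum $(\Ks',\hat\hg)$ slightly more transparent, while the paper's version isolates the finite-quotient phenomenon as a reusable standalone proposition. Your verifications of freeness (via stabilizers $(\C^*)^{Z(z)}$ with $Z(z)\in\Ks$) and of condition (b) (via $p(\hat\hg)=\pi^{-1}(p(\hg))$ and the induced isomorphism $\R^{m+s}/p(\hat\hg)\simeq\tg/p(\hg)$) are sound and, if anything, more detailed than the paper's.
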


Theorem~\ref{th:cox-analogue} allows to separate out combinatorial (simplicial complex $\Ks$) and geometric (subspace $\hg\subset\C^m$) data from the data $(\Sigma,\hg)$ defining manifold $M(\Sigma,\hg)$. Thus, Construction~\ref{cstr:pu} is essentially an analogue of the Cox-Batyrev construction of toric varieties~\cite{Co}.

\begin{proof}
Let $M$ be an arbitrary compact complex manifold equipped with a maximal torus action. According to Theorem~\ref{th:is} manifold $M$ is the quotient $M(\Sigma,\hg)=V_\Sigma/H$ for some fan $\Sigma\subset\tg$ and subspace $\hg\subset\tg_\C$.


It follows from Cox-Batyrev construction~\cite{Co} that any non-singular toric variety $V_\Sigma$ with an action of $T_\C$ is a $G$ quotient of $U(\Ks)$ for some algebraic subgroup $G\subset (\C^*)^m$. Here $\Ks$ is partially ordered set of cones of $\Sigma$, which is simplicial complex, since $\Sigma$ is regular and $T_\C=(\C^*)^m/G$:
\[
V_\Sigma=U(\Ks)/G.
\]

Let $\pi\colon (\C^*)^m\to(\C^*)^m/G$ be the natural projection. Then the manifold $M$ is the orbit space of $\pi^{-1}(H)\subset(\C^*)^m$ acting on $U(\Ks)$. This description almost coincide with Construction~\ref{cstr:pu} except for the fact the group $\pi^{-1}(H)$ is not necessarily connected, i.e. it has the form $H'\times \Gamma$, where $H'$ is connected and $\Gamma$ is a finite abelian group. To fill up this gap we use the following simple proposition:



\begin{proposition}\label{prop:finite}
Let group $\C^*$ be acting on a manifold $M$. Suppose that the subgroup $G_k\subset\C^*$ of $k$-th roots of unity acts freely on $M$. Then
\[
M/G_k\simeq (M\times \C^*)/\C^*,
\]
where group $\C^*$ acts on the manifold $M\times\C^*$ in the following manner:
\[
w\cdot(m,z)=(w\cdot m, w^kz).
\]
\end{proposition}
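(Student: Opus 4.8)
The plan is to exhibit an explicit isomorphism between $M/G_k$ and $(M\times\C^*)/\C^*$ and verify it is a diffeomorphism. First I would identify the correct map. The $\C^*$-action on $M\times\C^*$ given by $w\cdot(m,z)=(w\cdot m, w^k z)$ is free because its second coordinate is already free (translation in $\C^*$), so the quotient is a genuine manifold. The natural candidate for the isomorphism is the map $\Phi\colon (M\times\C^*)/\C^* \to M/G_k$ induced by sending a representative $(m,z)$ to the class of $z^{-1/k}\cdot m$ — but since $z^{1/k}$ is only defined up to a $k$-th root of unity, this descends to a \emph{well-defined} map into $M/G_k$ precisely because we quotient the target by $G_k$. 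I would write this out carefully: choosing any $k$-th root $\zeta$ of $z$, the point $\zeta^{-1}\cdot m \in M$ is well-defined modulo the $G_k$-action, and replacing the representative $(m,z)$ by $w\cdot(m,z)=(w\cdot m,w^k z)$ changes $\zeta$ to $w\zeta$ (up to $G_k$), so $\zeta^{-1}\cdot m$ is unchanged in $M/G_k$. Thus $\Phi$ is well-defined.

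Next I would construct the inverse. Given $m\in M$, send its $G_k$-class to the $\C^*$-class of $(m,1)\in M\times\C^*$; this is well-defined because replacing $m$ by $g\cdot m$ for $g\in G_k$ (so $g^k=1$) gives $(g\cdot m, 1) = g\cdot(m, g^{-k}) = g\cdot(m,1)$, which lies in the same $\C^*$-orbit. One then checks $\Phi$ and this map are mutually inverse by a direct computation: starting from $(m,z)$, we get $\zeta^{-1}\cdot m$, then $(\zeta^{-1}\cdot m, 1)$, which equals $\zeta\cdot(\zeta^{-1}\cdot m, \zeta^{-k}) = (m, z^{-1})$... here I would have to be careful about whether the convention gives $(m,z)$ or $(m,z^{-1})$, and adjust the sign of the exponent in the definition of $\Phi$ accordingly so the compositions are the identity.

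Both maps are manifestly smooth in local trivializations (the only subtlety is the multivalued $k$-th root, which is resolved by passing to the quotient), so once they are shown to be mutually inverse set-theoretic bijections respecting the smooth structure, they are diffeomorphisms. The main obstacle I anticipate is purely bookkeeping: handling the ambiguity of $z^{1/k}$ consistently and matching it against the $G_k$-quotient on the target, and getting the exponent conventions to line up so that the two composite maps are literally the identity rather than differing by an inversion or a root of unity. There is no deep geometric content — the freeness of the $G_k$-action guarantees $M/G_k$ is a manifold, the added $\C^*$-factor with the weight-$k$ action "untwists" the finite quotient into a free $\C^*$-quotient, and everything reduces to verifying the bijection on orbits.
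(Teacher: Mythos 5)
The paper offers no proof of this proposition at all --- it is labelled ``simple'' and used immediately --- so there is nothing to compare against except the intended folklore argument, which is exactly the one you give: the two mutually inverse maps $[(m,z)]\mapsto[\zeta^{-1}\cdot m]$ (with $\zeta^k=z$) and $[m]\mapsto[(m,1)]$. Your proposal is correct in substance and fills the gap the paper leaves. Two bookkeeping remarks. First, your justification of freeness is off: the action of $w$ on the second coordinate is multiplication by $w^k$, which is \emph{not} free --- its stabilizer is precisely $G_k$. Freeness of the diagonal action on $M\times\C^*$ is exactly where the hypothesis that $G_k$ acts freely on $M$ enters: if $w\cdot(m,z)=(m,z)$ then $w^k=1$, so $w\in G_k$, and then $w\cdot m=m$ forces $w=1$. (Properness is also needed for the quotient to be a manifold, but is immediate since $w^k=z'/z$ confines $w$ to a compact set.) Second, the exponent mismatch you worry about does not materialize: with your conventions, $\zeta\cdot(\zeta^{-1}\cdot m,1)=(m,\zeta^k)=(m,z)$, so the two composites are literally the identity and no sign adjustment is required. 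With those two points tightened, the argument is complete.
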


It follows from Proposition~\ref{prop:finite} that for some $r$ the $H'\times\Gamma$-quotient of $U(\Ks)$ coincides with the $H'\times(\C^*)^r$-quotient of $U(\Ks)\times (\C^*)^r$:

\[
M=V_\Sigma/H=U(\Ks)/(G\times H)=(U(\Ks)\times (\C^*)^r)/(H'\times(\C^*)^r).
\]
Since the group actions $H\colon V_\Sigma$ and $G\colon U(\Ks)$ are free, the group $H''=H'\times(\C^*)^r$ acts freely as well, thus condition (a) of Construction~\ref{cstr:pu} is satisfied. Condition (b) is satisfied also, since according to the Cox-Batyrev construction,  fan $\Sigma_\Ks$ projects bijectively onto the fan $\Sigma$, which in turn, according to condition (b) of Construction~\ref{cstr:is} bijectively projects to the complete fan.
%
\end{proof}

In the example below Constructions~\ref{cstr:pu} and~\ref{cstr:is} coincide.
%
%
\begin{example}[Hopf surface]

Let $\Sigma\subset\R^3$ be a fan with two one-dimensional cones spanned by vectors $(1,0)$ и $(0,1)$. Corresponding toric variety $V_\Sigma$ is $\C^2\backslash \{0\}\times\C^*$.


Consider complex subspace $\hg\subset\tg_\C\simeq\C^3$. It follows from the conditions (a) and (b) of Construction~\ref{cstr:is}, that $\dim_\C\hg=1$, i.e. $\hg=\{(\alpha_1z,\alpha_2z,\alpha_3z)|z\in\C\}$ for some $\alpha_i\in\C$. It is easy to check, that $\hg$ satisfies conditions (a) and (b) if and only if imaginary parts $\im(\alpha_1 /\alpha_3)$ and $\im(\alpha_2/\alpha_3)$ have the same sign.


Given $\Sigma$ and $H=\exp\hg$ as above, manifold $M(\Sigma,\hg)$ is

\[
M(\Sigma,\hg)=V_\Sigma / H = (\C^2\backslash \{0\}\times\C^*)/\{(e^{\alpha_1}z,e^{\alpha_2}z,e^{\alpha_3}z)|z\in\C\}=(\C^2\backslash \{0\})/\Gamma,
\]
where generator of the group $\Gamma\simeq\Z$ acts on $\C^2\backslash \{0\}$ via the coordinate-wise multiplication by $(e^{2\pi i \alpha_1/\alpha_3},e^{2\pi i \alpha_2/\alpha_3})$. Therefore manifold $M(\Sigma,\hg)$ is a Hopf surface and conditions on $\hg$ are equivalent to the conditions on $\lambda_i$ in Example~\ref{ex:hopf}.
\end{example}

\section{Complex geometry of manifolds $M(\Sigma,\hg)$}

It follows from the general results on the cohomology ring of manifolds $M(\Sigma,\hg)$ (see~\cite[Теорема~8.39]{BP}, \cite{Me1}) that almost all of them do not admit symplectic structure: top power of any element $\alpha\in H^2(M(\Sigma,\hg))$ is zero. Thus, most of the manifolds $M(\Sigma,\hg)$ are non-K\"{a}hler. In this section we prove, that despite non-existence of K\"{a}hler structure, many of the manifolds $M(\Sigma,\hg)$ admit \emph{transverse-K\"ahler structure} $\omega_\Fs$, which vanishes along canonical foliation $\Fs$ and is positive in the transverse directions. The form $\omega_\Fs$ turns out to be a powerful tool for the study of complex geometry of manifolds $M(\Sigma,\hg)$. For example, in~\cite{PU2} existence of the form $\omega_\Fs$ allows to describe all analytical subsets on certain \emph{moment-angle-manifolds}, i.e. manifolds $M(\Sigma,\hg)$, corresponding to $\Sigma=\Sigma_\Ks$.


\subsection{Transverse-K\"ahler forms}

\begin{definition}\label{def:tcf}
Let $M$ be a complex manifold. Differential form $\omega\in\Lambda^{1,1}(M)$ is \emph{transverse-K\"ahler} with respect to the holomorphic foliation $\Fs$,~if

\begin{itemize}
\item[(a)] $\omega$ is closed, $d\omega=0$;
\item[(b)] $\omega$ non-negative, i.e. $\omega(X,JX)\ge 0$ for any vector $X$;
\item[(c)] $\omega(X,JX)=0$ if and only if vector $X$ is tangent to the foliation: $X\in T\Fs$.
\end{itemize}
\end{definition}

\begin{example}[Hopf surface] 
Let $\mathcal H$ be a Hopf surface from Example~\ref{ex:hopf} with identical $\lambda_i$. In this case group $\Gamma$, generated by $(\lambda_1,\lambda_2)$ is a subgroup of $\C^*$, acting on $\C^2$ diagonally, therefore $\mathcal H$ fibres over $\C P^1$ with the fibre $\C^*/\Gamma$: 

\[
\C^2\backslash \{0\}\xrightarrow{\Gamma}\mathcal H\xrightarrow{\C^*/\Gamma}\C P_1.
\]

Consider the differential form $\omega=\pi^*\omega_{FS}\in \Lambda^{1,1}(\mathcal H)$, where $\pi\colon\mathcal H\to \C P^1$ is the projection, $\omega_{FS}$ is the Fubini-Study form on $\C P^1$. Since the form $\omega_{FS}$ is positive, the form $\omega$ is transverse-K\"ahler with respect to the foliation by the fibres of $\pi$.

\end{example}

Prior to constructing transverse-K\"ahler forms on manifolds $M(\Sigma,\hg)$ let us introduce some notions from convex geometry.

\begin{definition}\label{def:conv-fan}
Let $\Sigma$ be a complete fan in the vector space $N_\R$. Let us fix vectors $\mb v_1,\dots,\mb v_m$ generating one-dimensional cones of $\Sigma$ and the set of real numbers $b_1,\dots,b_m$. Consider $m$ linear inequalities in the dual space $N_\R^*$:

\begin{equation}\label{eq:polytope}
\langle\mb v_i,\mb u\rangle+b_i\ge 0,\quad i=1,\dots,m,
\end{equation}
where $\mb u\in N_\R^*.$ For every cone $\sigma\in\Sigma$ of the maximal dimension let us define \emph{vertex} $\mb u_\sigma\in N_\R^*$ as the solution of the system of $\dim N_\R$ linear equations:

\begin{equation}\label{eq:vertex}
\langle\mb v_i,\mb u_\sigma\rangle+b_i= 0,\quad v_i\in\sigma,
\end{equation}
where $\mb v_i$ runs over generators of the cone $\sigma$. It follows from the completeness of $\Sigma$, that the system~\eqref{eq:vertex} has the unique solution.


Complete fan $\Sigma$ is said to be \emph{normal}, if there exists a collection of numbers $\{b_i\}_1^m$ such, that for every vertex $\mb u_\sigma$ and linear form $\langle\mb v_i,\mb u\rangle+b_i$
%
\begin{itemize}
\item[(a)] $\langle\mb v_i,\mb u_\sigma\rangle+b_i\ge 0$;
\item[(b)] $\langle\mb v_i,\mb u_\sigma\rangle+b_i=0$ if and only if $\mb v_i\in\sigma$.
\end{itemize}

In this case the fan $\Sigma$ is also referred to as the \emph{normal} fan of a polytope, given by the system of inequalities~\eqref{eq:polytope}.


Complete fan $\Sigma$ is said to be \emph{weakly normal}, if there exists a collection of numbers $\{b_i\}_1^m$ such, that for every vertex $\mb u_\sigma$ and linear form $\langle\mb v_i,\mb u\rangle+b_i$

\begin{itemize}
\item[(a)] $\langle\mb v_i,\mb u_\sigma\rangle+b_i\ge 0$;
\item[(b)] the set defined by the system of inequalities~\eqref{eq:polytope}, has the maximal dimension $\dim N_\R^*$.
\end{itemize}
\end{definition}

Clearly, any normal fan is weakly normal. As follows from the example below, converse is not true.

%
\begin{example}
Consider in the vector space $V\simeq\R^3$ with the basis $(e_1,e_2,e_3)$ the fan $\Sigma$, whose one-dimensional cones are spanned by the following 7 vectors $\mb v_1=-e_1, \mb v_2=-e_2, \mb v_3=-e_3, \mb v_4=e_1+e_2+e_3, \mb v_5=e_1+e_2, \mb v_6=e_2+e_3, \mb v_7 = e_1+e_3$, having 10 maximal cones, spanned by the following tuples of $v_i$'s: $\{1,2,3\}$, $\{1,2,6\}$, $\{1,3,5\}$, $\{1,5,6\}$, $\{2,3,7\}$, $\{2,6,7\}$, $\{3,5,7\}$, $\{4,5,6\}$, $\{4,5,7\}$, $\{4,6,7\}$.


This fan is presented in~\cite[Section 3.4]{Fu} as an example of non-normal fan. Consider collection of numbers $b_1=b_2=b_3=0$, $b_4=b_5=b_6=b_7=1$. It is easy to check that the vertices are $\mb 0,-e^*_1,-e^*_2,-e^*_3\in V^*$ and every linear form $\mb u\mapsto\langle\mb u,\mb v_i\rangle + b_i$ is non-negative on every vertex

\end{example}

There is the following important result in the theory of toric varieties:

\begin{theorem}[{\cite[Sec.\,3.4]{Fu}}]\label{th:toric-proj}
Non-singular toric variety $V_\Sigma$ is projective if and only if the fan $\Sigma$ is normal.

\end{theorem}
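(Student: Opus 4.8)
The plan is to run the statement through the standard dictionary between torus-invariant Cartier divisors on $V_\Sigma$ and integral piecewise-linear support functions on the fan, together with the toric Nakai criterion, which identifies ampleness of such a divisor with strict convexity of its support function. First I would attach to any collection $\{b_i\}_1^m$ as in Definition~\ref{def:conv-fan} the $T$-invariant divisor $D=\sum_i b_i D_i$, where $D_i\subset V_\Sigma$ is the irreducible divisor associated to the ray $\R_{\ge}\mb v_i$, and its support function $\psi_D\colon N_\R\to\R$, linear on each cone $\sigma\in\Sigma$ and determined by $\psi_D(\mb v_i)=-b_i$. On a maximal cone $\sigma$ the vertex equations~\eqref{eq:vertex} say exactly that $\psi_D$ restricts there to the single linear form $\mb v\mapsto\langle\mb v,\mb u_\sigma\rangle$. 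Hence conditions (a) and (b) of normality translate verbatim into the statement that $\psi_D$ is \emph{strictly convex}: each vertex form $\langle\cdot,\mb u_\sigma\rangle$ agrees with $\psi_D$ precisely on $\sigma$ and strictly dominates it elsewhere. In this way the existence of numbers $\{b_i\}$ witnessing normality is equivalent to the existence of a strictly convex integral support function, i.e.\ of a strictly convex $T$-Cartier divisor.

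For the backward implication (normal $\Rightarrow$ projective) I would invoke the toric analogue of the Nakai--Moishezon criterion: on a complete non-singular toric variety the divisor $D_\psi$ is ample if and only if $\psi$ is strictly convex. Normality supplies such a $\psi$, hence an ample line bundle. Since the fan is complete (completeness is part of the hypothesis in the definition of a normal fan, so by Construction~\ref{cstr:toric} the variety is complete) and $V_\Sigma$ is smooth, a suitable multiple $kD$ is very ample and yields a closed embedding $V_\Sigma\hookrightarrow\C P^N$, establishing projectivity. Concretely this embedding is the monomial map attached to the lattice points of $kP$, where $P$ is the polytope~\eqref{eq:polytope}; here conditions (a)--(b) guarantee that on each maximal affine chart $U_\sigma$ the shifted monomials $\chi^{\mb u-\mb u_\sigma}$ with $\mb u\in kP\cap N^*$ generate the coordinate ring and separate points and tangent directions.

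For the forward implication (projective $\Rightarrow$ normal) I would begin with a projective embedding, pull back the hyperplane bundle to obtain an ample line bundle, and use that every divisor class on a toric variety is represented by a $T$-invariant divisor to replace it with an ample $T$-Cartier divisor $\sum_i b_i D_i$. Projectivity forces completeness, hence the fan is complete, and the Nakai criterion again renders $\psi_D$ strictly convex; reading off the $b_i$ and the vertices $\mb u_\sigma$ from~\eqref{eq:vertex} recovers exactly conditions (a) and (b). The main obstacle is the equivalence ampleness $\Leftrightarrow$ strict convexity itself, and in particular the very-ampleness step in the backward direction: one must verify that after passing to $kP$ the polytope carries enough lattice points to separate points and cotangent vectors on every chart. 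Smoothness of $V_\Sigma$ is what makes this clean, since on smooth complete toric varieties ampleness already implies very ampleness.
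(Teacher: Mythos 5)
Your proposal is correct: the translation of conditions (a)--(b) of Definition~\ref{def:conv-fan} into strict convexity of the support function $\psi_D$, the equivalence of strict convexity with ampleness via the toric Nakai criterion, and the fact that ample implies very ample on a smooth complete toric variety together constitute exactly the standard argument of \cite[Sec.~3.4]{Fu}. The paper offers no proof of its own and simply cites that reference, so your sketch coincides with the argument the paper relies on.
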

Theorem~\ref{th:toric-proj} implies that, if the fan $\Sigma$ is normal, the manifold $V_\Sigma$ admits K\"ahler form $\omega$, which is the curvature form of the line bundle $\mathcal O(1)$.
As we will show below, there is a similar result regarding transverse-K\"ahler forms on $M(\Sigma,\hg)$.
%

\begin{theorem}\label{th:tcf}
Let $M(\Sigma,\hg)$ be a manifold defined by Construction~\ref{cstr:is}. Suppose that the fan $q(\Sigma)$ is weakly normal. Then for any $k\in \mathbb N$ there exists a $C^k$-smooth form $\omega_\Fs$ transverse-K\"ahler with respect to the canonical foliation on the open $T_\C/H$-orbit.
%
\end{theorem}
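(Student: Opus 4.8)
The plan is to build $\omega_\Fs$ as $i\pd\bar\pd$ of an explicit plurisubharmonic potential written in homogeneous (Cox) coordinates, and to read off the conditions of Definition~\ref{def:tcf} from the convexity of that potential. First I would pass, via Theorem~\ref{th:cox-analogue}, from Construction~\ref{cstr:is} to Construction~\ref{cstr:pu}, so that $M=U(\Ks)/H''$ with $U(\Ks)\subset\C^m$ and the rays of $q(\Sigma)$ indexed by $[m]$ with generators $\mb v_1,\dots,\mb v_m$. Weak normality of $q(\Sigma)$ supplies (Definition~\ref{def:conv-fan}) numbers $b_1,\dots,b_m$ and, for each maximal cone $\sigma$, a vertex $\mb u_\sigma$ with $a_{\sigma,i}:=\langle\mb v_i,\mb u_\sigma\rangle+b_i\ge 0$; after replacing $\{b_i\}$ by $\{Nb_i\}$ the vertices and all the $a_{\sigma,i}$ scale by $N$, so for $N$ large every nonzero exponent exceeds $k+2$. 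I then set
\[
\Phi(z)=\sum_{\sigma}\ \prod_{i=1}^m |z_i|^{2a_{\sigma,i}},\qquad \omega_\Fs=\tfrac{i}{2}\pd\bar\pd\log\Phi .
\]
Nonnegativity of the exponents (weak normality (a)) makes each monomial extend continuously from the torus to all of $\C^m\supset U(\Ks)$, while the lower bound on the nonzero exponents makes $\Phi$, hence $\log\Phi$, of class $C^{k+2}$; thus $\omega_\Fs$ is a $C^k$ closed real $(1,1)$-form, which is Definition~\ref{def:tcf}(a).

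Second, I would check that $\omega_\Fs$ descends to $M$. Under the coordinate torus the monomials are multiplied by characters, and the ratio of the $\sigma$- and $\sigma'$-monomials is the character of weight $(\langle\mb v_i,\mb u_\sigma-\mb u_{\sigma'}\rangle)_i$. Since $\mb u_\sigma-\mb u_{\sigma'}$ lies in $(\tg/p(\hg))^*$, this weight is pulled back from the transverse quotient directions and hence the character is trivial in modulus on $H''$; therefore $\Phi$ transforms under $H''$ by a single positive factor, $\log\Phi$ changes by a pluriharmonic term, and $i\pd\bar\pd\log\Phi$ is $H''$-invariant and basic. Hence $\omega_\Fs$ is a well-defined $C^k$ form on $M$.

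Third comes the positivity and the kernel computation, carried out on the open orbit where all $z_i\ne 0$. Writing $z_i=e^{w_i}$, $w_i=x_i+i\theta_i$, the exponents are linear in $x$ and $\log\Phi=\bar\phi\circ q$ with $\bar\phi(y)=\log\sum_\sigma e^{2\langle\mb u_\sigma,y\rangle}$ on $\nu:=\tg/p(\hg)$. The Hessian of a log-sum-exp is the covariance of the weights $\{2\mb u_\sigma\}$, so it is positive definite exactly when the $\mb u_\sigma$ affinely span $\nu^*$; since $q(\Sigma)$ is complete the polytope $P$ is bounded, and weak normality (b) makes it full-dimensional, so its vertices $\mb u_\sigma$ do span. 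Consequently $\mathrm{Hess}_x\log\Phi$ is positive semidefinite with kernel precisely $p(\hg)$. A direct computation gives, for $X=\sum a_j\pd_{x_j}+b_j\pd_{\theta_j}$ and up to a positive constant,
\[
\omega_\Fs(X,JX)=\tfrac12\bigl(\mathrm{Hess}(a,a)+\mathrm{Hess}(b,b)\bigr)\ge 0,
\]
vanishing iff $a,b\in p(\hg)$; this is Definition~\ref{def:tcf}(b) upstairs and identifies the full null cone $K$, of real dimension $4\dim_\C\hg$.

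Finally I would descend this null space. Because $\hg$ and $\overline\hg$ are \emph{complex} subspaces transverse to $\tg$, both the real and imaginary parts of any $\xi\in\hg$ (resp.\ $\xi\in\overline\hg$) lie in $p(\hg)$, so the orbit directions of $H''$ and the leaf directions of $\Fs$ (Construction~\ref{cstr:foliation}) both sit inside $K$; moreover $\hg\cap\overline\hg=\{0\}$, since this intersection is a conjugation-invariant complex subspace meeting $\tg$ trivially. Passing to $T_{[x]}M=T_xV_\Sigma/\hg$, the null space of $\omega_\Fs$ becomes $K/\hg$, of real dimension $2\dim_\C\hg$, while the image of $\overline\hg$ --- which is $T\Fs$ --- has the same dimension and injects, so the two coincide, giving Definition~\ref{def:tcf}(c) on the open orbit. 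The main obstacle is exactly this last dichotomy: weak normality (b) guarantees strict transverse convexity only on the open orbit, so although $\ker\omega_\Fs=T\Fs$ there, the kernel may jump on the torus-invariant divisors --- which is why the statement is confined to the open orbit --- and retaining only $C^k$ smoothness, forced by the fractional exponents $a_{\sigma,i}$ needed to extend $\Phi$ across those divisors, is the price paid for working with a merely weakly normal fan.
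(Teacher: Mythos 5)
Your proposal is correct and follows essentially the same route as the paper: a potential of the form $\sum_\sigma |z|^{2(q^*\mb u_\sigma+\mb b)}$ built from the vertices supplied by weak normality, made $C^{k+2}$ by rescaling the $b_i$, shown to be basic for $H$ and plurisubharmonic, with the kernel on the open orbit identified as $\ker q\oplus J\ker q=\hg\oplus\overline{\hg}$ via the positive-definiteness of the log-sum-exp Hessian transverse to $p(\hg)$. The only (cosmetic) difference is that you write a single global potential in Cox coordinates on $U(\Ks)$ after invoking Theorem~\ref{th:cox-analogue}, whereas the paper works chart by chart on $V_\Sigma$ and glues the local forms $dd^c\log\Phi_\sigma$ using the Poincar\'e--Lelong formula.
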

\begin{remark}
Since the kernel spaces of the form $\omega_\Fs$ are required to coincide with the tangent spaces $T\Fs$ to the foliation $\Fs$ only on the dense open set, the normality condition of Theroem~\ref{th:toric-proj} could be somewhat weakened.
\end{remark}
\begin{proof}
The proof follows the scheme below:

\begin{enumerate}
\item[1.] on every chart $U_\sigma\subset V_\Sigma$ we construct $C^{k+2}$-smooth function $\Phi_\sigma\colon U_\sigma\to\R_>$;
\item[2.] on every chart $U_\sigma\subset V_\Sigma$ we define the form $\omega_\sigma=dd^c\log\Phi_\sigma$, where $d^c=J\circ d\circ J$ is a real differential operator ($J$ is the operator of the almost complex structure);
\item[3.] check that the forms $\omega_{\sigma_1}$ and $\omega_{\sigma_2}$ coincide $U_{\sigma_1}\cap U_{\sigma_2}$, providing non-negative form $\omega$ on $V_\Sigma$;
\item[4.] check that the form $\omega$ descends to the from $\omega_\Fs$ on $V_\Sigma/H$;
\item[5.] prove that $\ker\,\omega_\Fs=T\Fs$ on $T_\C/H$.
\end{enumerate}

1. Consider character lattice $N^*\subset\tg^*$, and let $\mb a_1,\dots\mb a_m$ be the primitive generators of one-dimensional cones of $\Sigma$. Let us fix the cone $\sigma$, spanned by vectors $\mb a_{i_1},\dots,\mb a_{i_t}$. By the definition, integral character $\mb w\in\check\sigma\cap N^*$ defines the regular function $\chi_\mb w\colon U_\sigma\to\C$. Similarly, any character $\mb w\in\check\sigma$, defines continuous function 
\[
\chi^\R_\mb w\colon U_\sigma\to\R_\ge.
\]

Пусть $N^*\subset\tg^*$~--- решетка целочисленных характеров, $\mb a_1,\dots\mb a_m$~--- примитивные порождающие одномерных конусов веера $\Sigma$. Зафиксируем конус $\sigma$, порожденный векторами $\mb a_{i_1},\dots,\mb a_{i_t}$. По определению, целочисленный характер $\mb w\in\check\sigma\cap N^*$ задает регулярную функцию $\chi_\mb w\colon U_\sigma\to\C$. Аналогично, любой характер $\mb w\in\check\sigma$ задает непрерывную функцию 
\[
\chi^\R_\mb w\colon U_\sigma\to\R_\ge.
\]

Extending arbitrarily the set of vectors $\{\mb a_{i_1},\dots,\mb a_{i_t}\}$ to an integral basis $\{\mb a'_1,\dots,\mb a'_{\dim N}\}$ of the lattice $N$ (this can be done, since $V_\Sigma$ is non-singular and, consequently, the fan $\Sigma$ is regular) the functions $\chi_\mb w$ and $\chi_\mb w^\R$ can be given in coordinates $z=(z_1,\dots,z_{\dim N})$ on $U_\sigma\simeq\C^{\dim\sigma}\times(\C^*)^{\dim N-\dim\sigma}$:

\[
\chi_\mb w\colon (z_1,\dots ,z_{\dim N})\mapsto \prod_i z_i^{\langle\mb w,\mb a'_i\rangle}
\]
\[
\chi^\R_\mb w\colon (z_1,\dots ,z_{\dim N})\mapsto \prod_i |z_i|^{\langle\mb w,\mb a'_i\rangle}
\]

%

Note, that the function $\chi^\R_\mb w$ is $C^k$-smooth, if all the values $\langle\mb w,\mb a_{i_j}\rangle$ are 0 or greater than $k$.


Let us remind, that $q\colon\tg\to\tg/p(\hg)$ is a natural projection. By the assumption of the theorem the fan $q(\Sigma)$ is weakly normal. Let $\mb v_i=q(\mb a_i)$ be the generators of its one-dimensional cones, and $b_1,\dots,b_m$ be a collection of numbers defining the weakly normal structure. For the cone $\sigma$ we fix character $\mb b_\sigma\in\tg^*$ such that for all generators $\mb a_{i_j}$ of $\sigma$ equality $\langle\mb b_\sigma, \mb a_{i_j}\rangle=b_{i_j}$ holds.


For every vertex $\mb u_\tau\in(\tg/p(\hg))^*$, where $\tau\in\Sigma$ is a maximal cone, we define the character
\[
\mb w_\tau = q^*(\mb u_\tau)+\mb b_\sigma.
\]

%
\begin{lemma}
Character $\mb w_\tau$ belongs to the cone $\check\sigma\subset \tg^*$
%
\end{lemma}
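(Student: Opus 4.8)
The plan is to reduce membership in $\check\sigma$ to a finite set of inequalities---one for each generator of $\sigma$---and then to identify each of these with condition (a) in the definition of weak normality of $q(\Sigma)$. Concretely, by the definition of the dual cone $\mb w_\tau\in\check\sigma$ holds precisely when $\langle\mb w_\tau,\mb a\rangle\ge 0$ for every $\mb a\in\sigma$. Since $\sigma$ is spanned by its primitive generators $\mb a_{i_1},\dots,\mb a_{i_t}$, every $\mb a\in\sigma$ is a non-negative combination $\mb a=\sum_j\mu_j\mb a_{i_j}$ with $\mu_j\ge 0$; hence it suffices to verify the single-generator inequalities $\langle\mb w_\tau,\mb a_{i_j}\rangle\ge 0$ for $j=1,\dots,t$, the general case following by linearity.

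Next I would compute $\langle\mb w_\tau,\mb a_{i_j}\rangle$ directly from the definition $\mb w_\tau=q^*(\mb u_\tau)+\mb b_\sigma$. Using the adjointness $\langle q^*(\mb u_\tau),\mb a_{i_j}\rangle=\langle\mb u_\tau,q(\mb a_{i_j})\rangle=\langle\mb u_\tau,\mb v_{i_j}\rangle$ together with the defining property $\langle\mb b_\sigma,\mb a_{i_j}\rangle=b_{i_j}$ of the chosen character (valid exactly because $\mb a_{i_j}$ is a generator of $\sigma$), this yields
\[
\langle\mb w_\tau,\mb a_{i_j}\rangle=\langle\mb u_\tau,\mb v_{i_j}\rangle+b_{i_j}.
\]
The right-hand side is precisely the linear form $\langle\mb v_{i_j},\mb u\rangle+b_{i_j}$ evaluated at the vertex $\mb u_\tau$ of $q(\Sigma)$; here I invoke the bijectivity of $q$ from condition (b) of Construction~\ref{cstr:is} to identify the maximal cone $\tau\in\Sigma$ with its image $q(\tau)$, so that $\mb u_\tau$ is genuinely a vertex of $q(\Sigma)$. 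Condition (a) of weak normality asserts that every such form is non-negative at every vertex, so $\langle\mb u_\tau,\mb v_{i_j}\rangle+b_{i_j}\ge 0$, which establishes the single-generator inequality and hence the lemma.

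I do not anticipate a genuine obstacle: the statement amounts to a bookkeeping identity that transports the weak-normality inequalities on $q(\Sigma)$ back to $\sigma$ through the adjoint $q^*$. The only step requiring care is the identification of vertices---checking that $\mb u_\tau$, indexed by a cone of $\Sigma$, really is the vertex of $q(\Sigma)$ to which condition (a) applies---which the bijective projection $q$ guarantees. I would also note that only the values of $\mb b_\sigma$ on the generators of $\sigma$ enter the calculation, so the freedom in how $\mb b_\sigma$ is extended off $\sigma$ is immaterial.
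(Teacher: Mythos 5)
Your proposal is correct and follows essentially the same route as the paper: reduce membership in $\check\sigma$ to the generators $\mb a_{i_j}$ by linearity, compute $\langle\mb w_\tau,\mb a_{i_j}\rangle=\langle\mb u_\tau,\mb v_{i_j}\rangle+b_{i_j}$ via the adjoint of $q$ and the defining property of $\mb b_\sigma$, and conclude by condition (a) of weak normality. The extra remarks on identifying $\mb u_\tau$ as a vertex of $q(\Sigma)$ and on the irrelevance of the extension of $\mb b_\sigma$ off $\sigma$ are sound but not needed beyond what the paper already does.
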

\begin{proof}[Proof of the lemma]
We have to check, that for every vector $\mb a\in\sigma$ the value $\langle\mb w_\tau,\mb a\rangle$ is non-negative. Since the cone $\sigma$ is spanned by the vectors $\mb a_{i_s}$, it suffices to check it for its generators:

\[
\langle\mb w_\tau,\mb a_{i_s}\rangle=\langle q^*(\mb u_\tau)+\mb b_\sigma,\mb a_{i_s}\rangle=\langle\mb u_\tau,q(\mb a_{i_s})\rangle+b_{i_s}=\langle\mb u_\tau, \mb v_{i_s}\rangle+b_{i_s}\ge 0,
\]
where the last inequality holds due to the weak normality of $q(\Sigma)$.
%
\end{proof}

So, every character $\mb w_\tau$ defines non-negative function $\chi^\R_{\mb w_\tau}$ on $U_\sigma$. Moreover, if $\sigma\subset\tau$, the function is strictly positive, since the values $\langle\mb w_\tau,\mb a_{i_s}\rangle$ corresponding to the zero coordinates of $z\in U_\sigma$ vanish. Thus, the function 
\[
\Phi_\sigma= \sum_{\tau} \chi^\R_{\mb w_\tau}
\]
is strictly positive on $U_\sigma$.

Multiplying, if necessary, all characters $\mb w_\tau$ by a positive constant, one can guarantee $\Phi_\sigma$ has preassigned smoothness class.

%

2. Let us define the form $\omega_\sigma=dd^c\log\Phi_\sigma$. According the general result~\cite[Th. I.5.6]{Dem}, the function $\log\Phi_\sigma$ is \emph{plurisubharmonic}, i.e. the from $\omega_\sigma$ is non-negative.


3. Consider two functions $\log\Phi_{\sigma_1}$ and $\log\Phi_{\sigma_2}$ on $U_{\sigma_1}\cap U_{\sigma_2}=U_{\sigma_1\cap\sigma_2}$. The definition of functions $\Phi_{\sigma_i}$ imply that $\log\Phi_{\sigma_1}-\log\Phi_{\sigma_2}=\log\sum_\tau (\chi^\R_{\mb b})=\log C\chi^\R_{\mb b}$, where $C$ is the number of vertices and $\mb b=\mb b_{\sigma_1}-\mb b_{\sigma_2}$. Since the left hand side is a well-defined function, the function $\chi^\R_{\mb b}=|z|^\mb b$ does not vanish on $U_{\sigma_1}\cap U_{\sigma_2}$. It follows from the Poincar\'e-Lelong formula~\cite[Th.\, II.2.15]{Dem}, that $dd^c\log \chi^\R_{\mb b}=0$, therefore the forms $\omega_{\sigma_1}$ and $\omega_{\sigma_2}$ coincide on $U_{\sigma_1}\cap U_{\sigma_2}$. Consequently all the forms $\omega_{\sigma}$ are glued into the global form $\omega$ on $V_\Sigma$: $\omega|_{U_\sigma}=\omega_\sigma$.


4. и 5. Proof of these steps follows the proof of Theorem\,4.6 in~\cite{PU2}.

\begin{lemma}
Let us consider a point $z$ in the open part $T_\C\subset V_\Sigma$. The kernel of $\omega$ in  $T_zV_\Sigma=T_z T_\C\simeq\tg\oplus J\tg$ is $\ker q\oplus J\ker q$.
%
\end{lemma}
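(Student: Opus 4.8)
The plan is to restrict everything to the open orbit $T_\C\subset V_\Sigma$, where the functions $\chi^\R_{\mb w}$ are elementary, and then reduce the computation of $\ker\omega$ to a statement about a real Hessian. First I would use that each $\chi^\R_{\mb w}$ is invariant under the compact torus $T$, hence is a function of the logarithmic modulus alone: identifying the radial coordinate of $z\in T_\C$ with a point $L(z)\in\tg$ so that $\chi^\R_{\mb w}(z)=e^{\langle\mb w,L(z)\rangle}$, the map $L$ is a submersion whose differential identifies $T_zT_\C\simeq\tg\oplus J\tg$ so that the radial directions $J\tg$ carry the $\tg$-coordinate while the angular directions $\tg$ are the $T$-orbit tangent $=\ker dL$. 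Substituting $\mb w_\tau=q^*(\mb u_\tau)+\mb b_\sigma$ gives, on the orbit,
\[
\log\Phi_\sigma=\langle\mb b_\sigma,L(z)\rangle+\psi\bigl(q(L(z))\bigr),\qquad \psi(s)=\log\sum_\tau e^{\langle\mb u_\tau,s\rangle},
\]
where $\psi$ is the log-sum-exp of the vertices $\mb u_\tau$ of the polytope in $\tg/p(\hg)$.

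The linear term $\langle\mb b_\sigma,L(z)\rangle$ is the real part of the holomorphic function $\langle\mb b_\sigma,\log z\rangle$, so $dd^c$ annihilates it (this is the same vanishing $dd^c\log\chi^\R_{\mb b}=0$ used in step~3 of the main proof). Hence $\omega=dd^c\log\Phi_\sigma=dd^c(\psi\circ q\circ L)$ on the orbit. For any function $g$ on $\tg$, a direct local computation in the coordinate $w=\log z$ shows that $dd^c(g\circ L)$ equals, up to a positive constant, the pullback through $dL$ of the real Hessian $\mathrm{Hess}\,g$; consequently, since $\mathrm{Hess}\,g$ is a real symmetric form, the kernel of the resulting $(1,1)$-form is $\tilde K\oplus J\tilde K$, where $\tilde K\subset\tg$ is the nullspace of $\mathrm{Hess}\,g$. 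Applying this with $g=\psi\circ q$ and using $\mathrm{Hess}(\psi\circ q)=q^{*}(\mathrm{Hess}\,\psi)\,q$, I obtain $\tilde K=q^{-1}(\ker \mathrm{Hess}\,\psi)$.

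It therefore remains to show $\mathrm{Hess}\,\psi$ is positive definite, which forces $\tilde K=\ker q=p(\hg)$ and yields $\ker\omega=\ker q\oplus J\ker q$, as claimed. This is the crux. Differentiating the log-sum-exp twice identifies $\mathrm{Hess}\,\psi(s)$ with the covariance matrix of the finite family $\{\mb u_\tau\}$ weighted by the strictly positive softmax weights $p_\tau(s)=e^{\langle\mb u_\tau,s\rangle}/\sum_{\tau'}e^{\langle\mb u_{\tau'},s\rangle}$; such a covariance is positive definite exactly when the points $\mb u_\tau$ are not contained in a proper affine subspace, i.e.\ when they affinely span $(\tg/p(\hg))^*$. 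Since the $\mb u_\tau$ are the vertices of the polytope defined by~\eqref{eq:polytope}, and weak normality of $q(\Sigma)$ (condition~(b) of Definition~\ref{def:conv-fan}) guarantees that this polytope is full-dimensional, its vertices do affinely span, so $\mathrm{Hess}\,\psi\succ0$.

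The main obstacle is precisely this last link: recognizing the Hessian of the potential as a covariance and translating full-dimensionality of the polytope into nondegeneracy of that covariance. Two points require care in carrying it out rigorously: that \emph{all} vertices actually occur in $\Phi_\sigma$ with positive weight, which holds because every $\chi^\R_{\mb w_\tau}$ is strictly positive on the open orbit so that each $p_\tau(s)>0$; and the bookkeeping of the identification $\tg\leftrightarrow J\tg$ under $J$, which ensures that the nullspace $\tilde K=p(\hg)\subset\tg$ reassembles into $\ker q\oplus J\ker q$ rather than some skew copy.
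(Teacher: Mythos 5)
Your proof is correct and is essentially the paper's argument in different packaging: the paper computes $\tfrac{d^2}{d\lambda^2}\log\Phi(\exp\lambda\mb v\cdot z)$ directly and obtains $\tfrac{1}{\Phi^2}\sum_{\tau_1,\tau_2}\chi^\R_{\tau_1}\chi^\R_{\tau_2}\langle\mb w_{\tau_1}-\mb w_{\tau_2},\mb v\rangle^2$, which is exactly the softmax-weighted covariance form of your $\mathrm{Hess}\,\psi$, and it likewise reduces nondegeneracy transverse to $\ker q$ to condition (b) of weak normality (full-dimensionality of the polytope) after splitting $\ker\omega=\ker\omega|_\tg\oplus J\ker\omega|_\tg$. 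Your explicit removal of the linear $\mb b_\sigma$ term via $dd^c\log\chi^\R_{\mb b}=0$ and your careful handling of the identification $\tg\oplus J\tg$ through the log-modulus map $L$ are welcome clarifications, but do not change the substance of the argument.
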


\begin{proof}[Proof of the lemma]
Consider the function $\Phi=\Phi_\sigma$. Since the function is constant along the toric part $\tg$ of $\tg_\C$, $\omega(\tg,J\tg)=0$. Moreover, the form $\omega$ is $J$-invariant, $J\ker\omega|_\tg=\ker\omega|_{J\tg}$, therefore $\ker \omega = \ker\omega|_\tg\oplus J\ker\omega|_\tg$.

%

To compute the kernel $\ker\omega|_\tg$ we find for every $\mb v\in\tg$

\[
\frac{d^2}{d\lambda^2}\log\Phi(\exp\lambda\mb v\cdot z)|_{\lambda=0}.
\]

Similarly to~\cite[Lemma 4.7]{PU2} one obtains:

\[
\frac{d^2}{d\lambda^2}\log\Phi(\exp\lambda\mb v\cdot z)|_{\lambda=0}=\frac{1}{\Phi^2(z)}\biggl(\sum_{\tau_1,\tau_2} \chi^\R_{\tau_1}(z)\chi^\R_{\tau_2}(z)\langle\mb w_{\tau_1}-\mb w_{\tau_2},\mb v\rangle^2 \biggr).
\]

The right hand side vanishes if and only if the values of all characters $\mb w_{\tau_1}-\mb w_{\tau_2}$ on the vector $v$ is zero, or, equivalently, $\langle\mb u_{\tau_1}-\mb u_{\tau_2}, q(\mb v)\rangle$=0. Condition (b) from Definition~\ref{def:conv-fan} of a weakly normal fan imply that this happens if and only if $q(\mb v)=0$. Hence, for $z\in T_\C$ we have: $\ker\omega|_\tg=\ker\, q$ and $\ker\omega=\ker\, q\oplus J\ker\, q=\hg\oplus\bar\hg$.

\end{proof} 

It follows from the lemma that the form $\omega$ is \emph{basic} with respect to the orbits of the $H$ acting on the open part $T_\C\subset V_\Sigma$, i.e. for any vector $\mb v\in \hg$ and the corresponding fundamental vector field $V$ we have: $\mathcal L_V\omega|_{T_\C}=i_V\omega|_{T_\C}=0$. By continuity reasons $\omega$ is basic on the whole $V_\Sigma$, thus it descends to the form on $V_\Sigma/H$, i.e. there exists such form $\omega_\Fs$ on $M(\Sigma,\hg)=V_\Sigma/H$, that $\omega=\pi^*\omega_\Fs$, where $\pi\colon V_\Sigma\to M(\Sigma,\hg)$ is the natural projection. The kernels of the form $\omega_\Fs$ at the points of $T_\C/H$ coincide with the tangent spaces to the orbits of the group $H'$, see Construction~\ref{cstr:foliation}. Thus the form $\omega_\Fs$ is transverse-K\"ahler with respect to the foliation $\Fs$.
%
\end{proof}

\subsection{Meromorphic functions and analytic subsets}
As an application of Theorems~\ref{th:cox-analogue} and \ref{th:tcf} we prove some results on complex geometry of manifolds $M(\Sigma,\hg)$.
%

\begin{theorem}\label{th:divisor}
Let $M$ be a compact complex manifolds equipped with a maximal torus action obtained via Construction~\ref{cstr:pu}: $M=U(\Ks)/H$, where $H\subset T_\C$. Assume, that (i) $U(\Ks)$ is simply connected; (ii) the only liner function $\mb u\in N^*\subset\tg^*$ vanishing identically on $p(\hg)$ is zero (here $N^*$ is the character lattice of $T$). Then there are finitely many analytic subsets of codimension one on $M$.
%
\end{theorem}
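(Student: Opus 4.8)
The plan is to reduce the statement to a question about $H$-invariant hypersurfaces in $U(\Ks)$ and then to show, using (i) and (ii), that every such irreducible hypersurface is one of the $m$ coordinate hyperplanes $\{z_i=0\}$; since there are only $m$ of these, finiteness follows at once. Let $\pi\colon U(\Ks)\to M=U(\Ks)/H$ be the quotient map. A codimension-one analytic subset $D\subset M$ pulls back to $\tilde D=\pi^{-1}(D)$, a closed, $H$-invariant, codimension-one analytic subset of $U(\Ks)$, and every irreducible analytic hypersurface of $M$ arises this way. First I would use (i): if some singleton $\{i\}$ were not a face of $\Ks$, then $U(\Ks)\subset\{z_i\neq 0\}$ and $\pi_1(U(\Ks))$ would surject onto $\Z$; hence simple connectivity forces all singletons to be faces, so the removed coordinate subspaces all have codimension $\ge 2$ and $U(\Ks)=\C^m\setminus(\codim\ge 2)$. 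For such a set $H^1(U(\Ks),\mathcal O)=0$ and $\mathrm{Pic}(U(\Ks))=0$, so $\tilde D$ is principal: $\tilde D=\{f=0\}$ for some $f$ holomorphic on $U(\Ks)$, which extends to an entire function on $\C^m$ by Hartogs. Since $H$ is connected it fixes each irreducible component of $\tilde D$, so I may assume $f$ is irreducible.

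The geometric core is the claim that such an irreducible $f$ is, up to a nowhere-vanishing factor, a single monomial. For $\xi\in\hg$ write $X_\xi=\sum_j\xi_j z_j\pd_{z_j}$ for the associated holomorphic vector field. Invariance of $\tilde D$ means $f\circ\exp\xi$ and $f$ have the same divisor, so $f\circ\exp\xi=e^{g_\xi}f$ with $e^{g_\xi}$ a unit on $U(\Ks)$, whence $X_\xi f=\phi_\xi f$ for an entire function $\phi_\xi$. Granting that $\phi_\xi$ is \emph{constant} (that is, that $f$ is $H$-semi-invariant), every exponent $\alpha$ occurring in the Taylor expansion of $f$ satisfies $\langle\alpha,\xi\rangle=\phi_\xi$ for all $\xi\in\hg$, so any two exponents differ by an element of $\hg^{\perp}\cap N^*$. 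A short computation shows $\langle\mu,\xi\rangle=0$ for all $\xi\in\hg$ iff $\langle\mu,p(\hg)\rangle=0$, so $\hg^{\perp}\cap N^*=p(\hg)^{\perp}\cap N^*$, which is $\{0\}$ by hypothesis (ii). Hence $f$ has a single exponent and $\tilde D=\{z^{\alpha}=0\}$ is a union of coordinate hyperplanes; by irreducibility it is a single $\{z_i=0\}$.

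The hard part, and the step I expect to be the main obstacle, is precisely the constancy of $\phi_\xi$. A purely formal support argument cannot suffice: for instance $f=e^{z}$ has $H$-invariant (empty) zero set with non-constant $\phi$, so one must use that $\tilde D$ is a genuine hypersurface whose quotient $D$ is \emph{compact}, together with the density built into (ii). I would argue by dichotomy. An irreducible hypersurface contained in the boundary $\bigcup_i\{z_i=0\}$ must equal some $\{z_i=0\}$, and we are done; otherwise $\tilde D$ meets the open orbit $T_\C$. In that case condition (ii) says that no nonzero character of $T_\C$ is trivial on $p(\hg)$, so the only Laurent monomials invariant under $\exp(p(\hg)_\C)=\exp(\hg+\overline\hg)$ are constants, and the canonical foliation $\Fs$ has leaf closures equal to the fibres of the map induced by $z\mapsto(\log|z_j|)_j$ into $\tg/p(\hg)$. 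Restricting the pluriharmonic functions $\langle\nu,\log|z|\rangle$, with $\nu\in p(\hg)^{\perp}$, to the compact set $D$ and applying the maximum principle along the (closed-up) leaves should force $\tilde D$ to be invariant under all of $\exp(\hg+\overline\hg)$; but then its defining function on $T_\C$ would be semi-invariant under this larger group, hence (by the invariant-monomial statement above) a single nowhere-vanishing Laurent monomial, contradicting $\tilde D\cap T_\C\neq\es$. This maximum-principle passage from $\hg$-invariance to $(\hg+\overline\hg)$-invariance, modelled on the proof of Theorem~4.6 in~\cite{PU2}, is the technical heart of the argument and is where compactness of $D$ and the irrationality condition (ii) are genuinely used.

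Finally, the conclusion is immediate once the claim is established: the only codimension-one analytic subsets of $M$ are the images $\pi(\{z_i=0\})$ for $i=1,\dots,m$, so there are at most $m$ of them, and in particular finitely many.
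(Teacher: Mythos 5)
Your overall architecture is the right one, and it matches the strategy of the argument this paper defers to (Theorem 4.15 of~\cite{PU2}): pull $D$ back to an $H$-invariant divisor $\tilde D\subset U(\Ks)$, use (i) to see that $U(\Ks)$ is the complement of a codimension~$\ge 2$ coordinate arrangement so that $\tilde D$ extends (Remmert--Stein) to the zero divisor of an entire function $f$ on $\C^m$, and use (ii) to force $f$ to be essentially a monomial. (A small inaccuracy: $H^1(U(\Ks),\mathcal O)$ need not vanish --- it is infinite-dimensional already for $\C^2\setminus\{0\}$ --- but principality still follows since every divisor of $\C^m$ is principal.) You also correctly isolate the one step that does not follow formally: passing from invariance of the divisor, i.e.\ $X_\xi f=\phi_\xi f$ with $\phi_\xi$ entire, to semi-invariance of $f$. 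However, your proposed way of closing that gap is not a proof and, as formulated, would not work. The functions $\langle\nu,\log|z|\rangle$ descend only to the open orbit $T_\C/H$, so on $D$ they are pluriharmonic functions on the non-compact set $D\cap(T_\C/H)$; they need not extend to $D$ and need not attain their suprema in that open part, so the maximum principle gives nothing, and the intermediate goal ($(\hg+\overline\hg)$-invariance of $\tilde D$) is never reached. Moreover the machinery you appeal to (leaf closures, the transverse-K\"ahler form of Theorem~\ref{th:tcf}) is not available here, since Theorem~\ref{th:divisor} does not assume weak normality of the quotient fan.

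The gap is closed by two elementary observations, and the second uses a hypothesis your proposal never invokes: condition (b) of Construction~\ref{cstr:pu}. First, argue degree by degree in the Taylor expansion of an irreducible $f$: the lowest homogeneous component $f_d$ satisfies $X_\xi f_d=\phi_\xi(0)f_d$, so all its exponents induce the same functional on $\hg$, hence on $p(\hg)$, and (ii) makes $f_d$ a single monomial $cz^{\alpha_0}$; an induction on degree comparing supports in $X_\xi f=\phi_\xi f$ then gives $\mathrm{supp}(f)\subset\alpha_0+\Z_{\ge0}^m$, i.e.\ $f=z^{\alpha_0}g$ with $g(0)\ne0$. Second, completeness of $q(\Sigma_\Ks)$ yields $\R^m_{\ge0}+p(\hg)=\R^m$, so for every $z_0\in(\C^*)^m$ the set $\log|Hz_0|=\log|z_0|+p(\hg)$ contains points with all coordinates arbitrarily negative; equivalently $0\in\overline{Hz_0}$. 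If $\tilde D$ met $(\C^*)^m$ at some $z_0$, then $g$ would vanish on all of $Hz_0$ and hence, by continuity, at $0$, contradicting $g(0)\ne0$. Therefore $\tilde D\subset\bigcup_i\{z_i=0\}$, irreducibility gives $\tilde D=\{z_i=0\}\cap U(\Ks)$, and there are at most $m$ divisors. Your example $f=e^{z}$ correctly shows that some geometric input beyond formal weight-counting is unavoidable; the input that actually does the job is the completeness of the quotient fan forcing $H$-orbits to accumulate at the origin, not a maximum principle on $D$.
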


\begin{corollary}
There are no non-constant meromorphic functions on the manifolds satisfying hypothesis of Theorem~\ref{th:divisor}.
%
%
\end{corollary}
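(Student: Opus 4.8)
The plan is to argue by contradiction, deducing the statement directly from Theorem~\ref{th:divisor}: if $M$ carried a non-constant meromorphic function, its level sets would furnish infinitely many pairwise distinct codimension-one analytic subsets, contradicting the finiteness established there. So suppose $f$ is a non-constant meromorphic function on $M$, and regard it as a meromorphic map $f\colon M\dashrightarrow \C P^1$. Let $U\subset M$ be the dense open locus on which $f$ is holomorphic and finite, and for each $c\in\C$ set $Z_c=\overline{\{x\in U\colon f(x)=c\}}$, the zero set of the meromorphic function $f-c$.

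First I would check that the $Z_c$ are pairwise distinct analytic subsets. On $U$ the level sets $\{f=c\}$ are pairwise disjoint, so if $Z_c=Z_{c'}$ for $c\neq c'$ then $f$ would assume two distinct values along a dense subset of this common set, which is impossible as soon as $Z_c\cap U\neq\varnothing$. Next I would verify that for all but finitely many $c$ the set $Z_c$ is a nonempty analytic subset of pure codimension one. For this I would pass to the closure $\Gamma\subset M\times\C P^1$ of the graph of $f$: it is an irreducible compact analytic set, the projection $\mathrm{pr}_1\colon\Gamma\to M$ is proper and an isomorphism over $U$, hence bimeromorphic, so $\dim\Gamma=\dim M$. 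By Remmert's proper mapping theorem the image $\mathrm{pr}_2(\Gamma)\subset\C P^1$ is a closed analytic subset, and since $f$ is non-constant this image is all of $\C P^1$. Therefore every fiber $\Gamma_c=\mathrm{pr}_2^{-1}(c)$ is nonempty of dimension $\dim M-1$ for all $c$ outside the finite set of critical values where fiber dimension could jump; projecting back via the bimeromorphic $\mathrm{pr}_1$, the set $Z_c$ is then a nonempty codimension-one analytic subset of $M$ for all but finitely many $c$.

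Finally I would conclude: the assignment $c\mapsto Z_c$ is an injection from a cofinite (hence uncountable) subset of $\C$ into the set of codimension-one analytic subsets of $M$. This contradicts Theorem~\ref{th:divisor}, which guarantees only finitely many such subsets, and therefore no non-constant $f$ can exist. The main obstacle is the middle step — ensuring the fibers $Z_c$ are genuinely nonempty and of the correct codimension for infinitely many $c$; the cleanest route is the graph-closure and proper-mapping argument above, which sidesteps resolving the indeterminacy locus of $f$ (of codimension $\ge 2$) by hand and reduces everything to the surjectivity of $\mathrm{pr}_2\colon\Gamma\to\C P^1$.
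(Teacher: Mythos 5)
Your proposal is correct and follows essentially the same route as the paper, whose proof (deferred to Corollary~4.16 of~\cite{PU2}) likewise derives a contradiction with the finiteness of codimension-one analytic subsets by producing the infinite family of pairwise distinct level-set divisors of $f-c$, $c\in\C$. Your graph-closure/Remmert argument merely supplies the standard details (nonemptiness and pure codimension one of $Z_c$ for all but finitely many $c$) that the cited proof takes for granted.
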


The proof of the both statements literally follows proff of Theorem 4.15 and Corollary 4.16 in~\cite{PU2}.
%

Given the fixed $\Sigma$, the set of complex subspaces $\hg\subset\tg_\C$, satisfying conditions (a) and (b) of Construction~\ref{cstr:is} forms open (in the ordinary topology) subset $\mathcal M_\Sigma$ of the complex Grassmanian $\Gr(\hg,\tg_\C)$. The condition of normality of the fan $q(\Sigma)$ is, clearly, also open.

\begin{definition}
Some statement is said to be true for \emph{the general complex structure} on $M(\Sigma,\hg)$, if the set of subspaces  $\hg\subset\mathcal M_\Sigma$ such that $S$ does not hold has zero Lebesgue measure.
%
\end{definition}

\begin{theorem}
Let $M(\Sigma,\hg)$ be a complex manifold endowed with a general complex structure, such that the fan $q(\Sigma)$ is normal. Let $Y\subset M(\Sigma,\hg)$ be an analytic subset. Then there are two possibilities:
%

\begin{itemize}
\item[(i)] $Y$ is the closure of an orbit $Y=\overline{T_\C/H\cdot x}$;
\item[(ii)] $Y$ is a compact torus contained in a leaf of the canonical foliation $\Fs$.
\end{itemize}
\end{theorem}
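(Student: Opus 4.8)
The principal tool is the transverse-K\"ahler form $\omega_\Fs$ of Theorem~\ref{th:tcf}, available because normality of $q(\Sigma)$ implies weak normality: on the open orbit $\mathcal O:=(T_\C/H)\cdot x_0\subset M$ it is a closed, semi-positive $(1,1)$-form whose kernel is exactly $T\Fs$. We may assume $Y$ irreducible. The proof then splits according to whether $Y$ meets $\mathcal O$, and, when it does, whether $Y$ is tangent to $\Fs$ along $\mathcal O$, i.e. whether the Hermitian form attached to $\omega_\Fs|_Y$ vanishes generically.

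When $Y\cap\mathcal O=\varnothing$ I would argue by induction on $\dim_\C M$. The complement $M\setminus\mathcal O$ is the union of the images $D_j=\pi(D_{\rho_j})$ of the toric divisors $D_{\rho_j}\subset V_\Sigma$ under $\pi\colon V_\Sigma\to M$; each $D_{\rho_j}$ is $T_\C$-invariant, hence $H$-invariant, and $D_{\rho_j}/H$ is again a compact complex manifold with a maximal torus action whose quotient fan is $q(\mathrm{Star}(\rho_j))$. Normality of $q(\Sigma)$ (projectivity of $V_{q(\Sigma)}$, Theorem~\ref{th:toric-proj}) restricts to the corresponding face fan, so the inductive hypothesis applies to $Y\subset D_j$, and both alternatives transport verbatim back to $M$: an orbit closure in $D_j$ is an orbit closure in $M$, and a compact torus in a leaf of $\Fs|_{D_j}$ is one in a leaf of $\Fs$.

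Suppose now $Y\cap\mathcal O$ is dense in $Y$ and $Y$ is tangent to $\Fs$ there. Then $Y\cap\mathcal O$ is a union of open pieces of leaves, and since $Y$ is compact and irreducible it lies in the closure $\overline L$ of a single leaf $L\cong\C^{\dim_\C H}/\Lambda$ (Construction~\ref{cstr:foliation}). I would identify $\overline L$ with a compact orbit of the subtorus obtained as the closure of $\exp(\hg+\overline\hg)\subset T_\C$, and then invoke the classical description of compact analytic subsets of complex tori to conclude that $Y$ is a compact complex subtorus contained in $L$ --- alternative (ii).

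The main case, and the main obstacle, is when $Y$ is \emph{not} tangent to $\Fs$, where the claim is that $Y$ is the closure of a single $T_\C/H$-orbit. The plan is to pass to $\widetilde Y:=\pi^{-1}(Y)\subset V_\Sigma$, an irreducible $H$-invariant analytic set meeting the big torus $T_\C\subset V_\Sigma$, and to show that $\widetilde Y$ is in fact $T_\C$-invariant; since a $T_\C$-invariant analytic subset of a toric variety is a union of orbit closures and $\widetilde Y$ is irreducible, this gives $\widetilde Y=\overline{T_\C\cdot\widetilde x}$ and hence (i). The strict positivity of $\omega_\Fs$ in the transverse directions together with the compactness of $Y$ should force the image of $Y$ in the transverse (leaf-space) model $V_{q(\Sigma)}$ to be a closed analytic subset, and the $T$-invariant plurisubharmonic potentials $\log\Phi_\sigma$ from the proof of Theorem~\ref{th:tcf}, via the maximum principle, should force invariance of $\widetilde Y$ under the compact torus $T$; invariance under $i\tg$ then follows by holomorphicity, yielding full $T_\C$-invariance. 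The delicate point is that the leaves of $\Fs$ need not be closed, so this argument cannot be run on a genuine fibration over $V_{q(\Sigma)}$ but only transversally, and it is exactly here that the hypothesis of a \emph{general} complex structure is indispensable: a special $\hg$ may carry transverse analytic subsets that are not orbit closures (as for resonant Hopf surfaces), but their existence imposes countably many rational resonance relations on the period data determined by $\hg$, and the subspaces satisfying any of them form a set of measure zero in $\mathcal M_\Sigma$. Organising these reductions as in the proof of Theorem~4.15 of~\cite{PU2} then completes the argument.
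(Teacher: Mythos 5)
Your overall architecture---the trichotomy according to whether $Y$ meets the open orbit and whether $\omega_\Fs|_Y$ degenerates, the reduction of the boundary case to lower-dimensional manifolds of the same type, and the appeal to genericity to exclude ``resonant'' subsets---is exactly the architecture of Theorem~4.18 of~\cite{PU2}, which is all the paper itself invokes (its proof is two sentences: the normality condition is open in $\mathcal M_\Sigma$, hence one may repeat the argument of~\cite{PU2}). The difficulty is that the one case carrying the entire content of the theorem, an irreducible $Y$ meeting the open orbit and not tangent to $\Fs$, is left at the level of ``should force''. The two assertions doing all the work there are unproved, and the first is not even pointed in the right direction. The maximum-principle step cannot run as described: $\log\Phi_\sigma$ is plurisubharmonic on a chart of $V_\Sigma$, not on $M$, and $\widetilde Y=\pi^{-1}(Y)$ is non-compact, so there is no compact analytic set to apply the maximum principle to; worse, if one could conclude that the potential is constant along $Y$, that would give $\omega_\Fs|_Y=0$, i.e.\ tangency to $\Fs$---the hypothesis you are trying to refute in this case, not the $T$-invariance you want. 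What actually yields invariance in~\cite{PU2} is a different mechanism: one translates $Y$ by the connected group $T_\C/H$, observes that all translates lie in a single class (of divisors, respectively of cycles in a compact component of the cycle space, with volumes controlled by $\int_Y\omega_\Fs^{\dim Y}$), and shows that for a generic $\hg$ a positive-dimensional family of such cycles cannot exist, forcing $gY=Y$ for all $g$. Your closing claim that non-invariant transverse subsets impose ``countably many rational resonance relations'' each cutting out a measure-zero subset of $\mathcal M_\Sigma$ is precisely the statement that needs proof; asserting it does not discharge it.

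The tangent case also has a gap: you place $Y$ inside the closure $\overline L$ of a single leaf and propose to invoke the classification of compact analytic subsets of complex tori. But $\overline L$ is the closure of an orbit of the non-closed group $H'=\exp\overline\hg$; it is in general a real torus orbit, not a complex torus, so that classification does not apply to it. The correct route is the reverse: tangency of the irreducible set $Y$ to the holomorphic foliation $\Fs$ on a dense open subset forces $\dim Y\le\dim_\C\Fs$ and forces $Y$ to be a union of pieces of leaves; a single leaf is $\C^{\dim_\C H}/\Lambda$ with $\Lambda$ discrete, and one classifies compact analytic subsets of such quotients directly, concluding that $Y$ is a compact complex subtorus inside one leaf. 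Finally, in the inductive boundary case you must also verify that a general $\hg\in\mathcal M_\Sigma$ induces a general subspace for each $\mathrm{Star}(\rho_j)$ (the bad locus upstairs being a finite union of preimages of measure-zero sets under linear projections); otherwise the inductive hypothesis is unavailable. None of this is fatal to the strategy, but as written the proposal is a plan for a proof rather than a proof.
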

\begin{proof}
According to the remark above, the set of $\hg\in\mathcal M_\Sigma$, such that the fan $q(\Sigma)$ is normal, is open. Hence, for a general complex structure such that the fan $q(\Sigma)$ is normal, the hypothesis of Theorem~4.18~\cite{PU2} holds on we can repeat the proof.
%
\end{proof}

\bibliography{us14en}

\end{document}